\numberwithin{equation}{section}
\newcommand{\GGamma}{{G\Gamma}}
\newcommand{\GQ}{{GQ}}
\newcommand{\GP}{{GP}}
\newcommand{\ff}{{f}}
\newcommand{\bbF}{{\mathbb F}}
\newcommand{\bbK}{{\mathbb K}}
\newcommand{\NN}{{\mathbb N}}
\newcommand{\QQ}{{\mathbb Q}}
\newcommand{\ZZ}{{\mathbb Z}}
\newcommand{\sfp }{{\sfp}}
\newcommand{\frakK}{{\mathfrak K}}
\newcommand{\bfz}{{\mathbf z }}
\newcommand{\CH}{{C\!H}}
\newcommand{\CK}{{C\!K}}
\newcommand{\calK}{{\mathcal K}}
\newcommand{\calP}{{\mathcal P}}
\newcommand{\calR}{{\mathcal R}}
\newcommand{\calS}{{\mathcal S}}
\newcommand{\scB}{{\mathscr B}}
\newcommand{\scC}{{\mathscr C}}
\newcommand{\scS}{{\mathscr S}}
\newcommand{\scX}{{\mathscr X}}
\newcommand{\inc}{\hookrightarrow}
\newcommand{\Span}{\operatorname{Span}}
\newcommand{\SG}{SG}
\newcommand{\Fun}{\operatorname{Fun}}
\newcommand{\gr}{\operatorname{gr}}
\newcommand{\fin}{\operatorname{fin}}
\newcommand{\rk}{{\operatorname{rank}}}
\newcommand{\SO}{{{SO}}}
\newcommand{\Sp}{{Sp}}
\newcommand{\Pf}{{\operatorname{Pf}}}
\newcommand{\GX}{{{GX}}}
\newcommand{\IG}{{{IG}}}
\newcommand{\SP}{\calS\calP}
\newcommand{\GT}{G\Theta}
\newcommand{\GB}{{\GT'}}
\newcommand{\GC}{{\GT}}
\newcommand{\OG}{{OG}}
\newcommand{\vece}{\pmb{e}}
\newtheorem{thm}{Theorem}[section]  
\newtheorem{lem}[thm]{Lemma}  
\newtheorem{prop}[thm]{Proposition} 
\newtheorem{df-pr}[thm]{Definition-Proposition}
\theoremstyle{definition} 
\newtheorem{defn}[thm]{Definition}
\newtheorem{rem}[thm]{Remark}
\newtheorem{example}[thm]{Example}
\begin{document}
\title{Double Grothendieck Polynomials for Symplectic and Odd Orthogonal Grassmannians}
\author{Thomas Hudson, Takeshi Ikeda, Tomoo Matsumura, Hiroshi Naruse}
\maketitle
\begin{abstract}
We study the double Grothendieck polynomials of Kirillov--Naruse for the symplectic and odd orthogonal Grassmannians. These functions are explicitly written as sums of Pfaffian and are identified with the stable limits of the fundamental classes of Schubert varieties in the torus equivariant connective $K$-theory of these
isotropic Grassmannians. We also provide a combinatorial description of the ring formally spanned by double Grothendieck polynomials. 
\end{abstract}
\section{Introduction}
In \cite{KrNr}, Kirillov and Naruse  introduced the double Grothendieck polynomials of classical types in order to represent the $K$-theoretic Schubert classes for the corresponding flag varieties. In this paper, we study these functions for the odd orthogonal and symplectic Grassmannians. We first set up a combinatorial model for the ring formally spanned by the stable Schubert classes. This construction is based on previous work for the maximal isotropic Grassmannians (\cite{IkedaNaruse}). By utilizing our result in \cite{HIMN} and equivariant localization techniques (\cite{KostantKumar}, \cite{Krishna}), we will explicitly specify the elements in the ring corresponding to the stable Schubert classes.



Fix integers $k$ and $n$ such that $0\leq k<n$. Let $ \bbF$ be an algebraically closed field of characteristic zero. We equip the vector space $ \bbF^{2n+1}$ with a non-degenerate symmetric bilinear form and $ \bbF^{2n}$ with a symplectic form. We denote respectively by $\OG^k(n)$ and $\SG^k(n)$ the {\it odd-orthogonal} and {\it symplectic} Grassmannians of $(n-k)$-dimensional isotropic subspaces in $ \bbF^{2n+1}$ and $\bbF^{2n}$. We denote both these isotropic Grassmannians by $\IG^k(n)$. Each of such spaces can be realized as a homogeneous space $G/P_k$ where $P_k$ is a parabolic subgroup either of the special orthogonal group $G=\SO_{2n+1}(\bbF)$ or of the symplectic group $G=\Sp_{2n}(\bbF)$. Let $T_n$ be a maximal torus of $G$, $B$ a Borel subgroup of $G$ containing $T_n$, and $B_{-}$ a Borel subgroup opposite to $B$, {\it i.e.}, $B\cap B_{-}=T_n.$ For both $\SO_{2n+1}(\bbF)$ and $\Sp_{2n}( \bbF)$, the Weyl group $W_n$ is isomorphic to the hyperocthedral group given by $\{-1\}^n \rtimes S_n$. Let $W_n^{(k)}$ be the set of minimal length coset representatives of $W_n/W_{n,k}$ where $W_{n,k}$ is the Weyl group  of $P_k$. There is a natural bijection between $W_n^{(k)}$ and the set $\SP^k(n)$ of  $k$-{\it strict partitions} contained in the $(n-k,n+k)$ rectangle (cf. Buch--Kresch--Tamvakis \cite{BuchKreschTamvakis1}). Moveover, in both the odd-orthogonal and symplectic cases, $W_n^{(k)}$ is also in bijection with the set of $T_n$-fixed points in $\IG^k(n)$. For each $\lambda\in \SP^k(n)$, let $e_\lambda$ be the corresponding $T_n$-fixed point and define the Schubert variety $\Omega_\lambda$ associated to $\lambda$ as the closure  of its $B_{-}$-orbit.

We will use the torus-equivariant connective $K$-theory studied by Krishna \cite{Krishna} (cf. Dai--Levine \cite{DaiLevine}). For any non-singular variety $X$ endowed with an action of an algebraic torus $T_n\cong (\bbF^\times)^n$, there is a $\ZZ$-graded algebra $\CK_{T_n}(X)$ over $\CK_{T_n}(pt)$ which can be identified with the {\it graded formal power series ring\/} $\QQ[\beta][[b_1,\ldots,b_n]]_{\gr}$ where $\deg(b_i)=1$ and $\deg(\beta)=-1$. It interpolates the $T_n$-equivariant chow ring $\CH_{T_n}^*(X)$ and the $K$-theory $K_{T_n}(X)$ of $T_n$-equivariant algebraic vector bundles on $X$. By this we mean that it specializes to $\CH_{T_n}^*(X)$ for $\beta=0$ and to $K_{T_n}(X)$ for $\beta=-1$. Since throughout the paper we will work with rational coefficients, here $\CH_{T_n}^*(X)$ stands for the equivariant chow ring with $\QQ$ coefficients and $K_{T_n}(X)$ denotes the equivariant $K$-theory tensored with $\QQ$.

In $\CK_{T_n}(\IG^k(n))$, there exists a {\it fundamental class\/} $[\Omega_\lambda]_{T_n}$
associated to the Schubert variety $\Omega_\lambda$, which specializes to the corresponding $T_n$-equivariant Schubert class  in $\CH^*_{T_n}(\IG^k(n))$ and to the class of the structure sheaf of $\Omega_\lambda$ in $K_{T_n}(\IG^k(n))$. As we are interested in $[\Omega_{\lambda}]_T$, the limit  of the classes $[\Omega_{\lambda}]_{T_n}$ in the graded projective limit of the rings $\CK_{T_n}(\IG^k(n))$, we introduce $\bbK^{\fin}_T(\IG^k)$ a subring which contains all of them (see Definition \ref{dfFiniteCKT}).
More precisely, if we set
\[
\calR_b=\bigcup_{n\geq 0}\QQ[\beta][[b_1,\ldots,b_n]]_{\gr},
\]
then $\bbK^{\fin}_T(\IG^k)$ is the $\calR_b$-module on the formal basis given by $[\Omega_\lambda]_{T}$ indexed by the all $k$-strict partitions $\lambda$.

In order to deal with $\bbK^{\fin}_T(\IG^k)$ algebraically and combinatorially, we introduce the ring $\calK_\infty$, which governs the equivariant $K$-theoretic Schubert calculus of the full-flag varieties of both types B and C. A cornerstone of the construction of $\calK_\infty$ is the ring ${\GGamma} $ introduced in \cite{IkedaNaruse}. It is defined as a subring of $\QQ[\beta][[x]]_{\gr}$ with variables $x=(x_1,x_2,\dots)$ consisting of symmetric graded formal power series with a certain cancellation property (see \S\ref{sec3-1}). Then one sets
\[
\calK_\infty=\GGamma \otimes_{\QQ[\beta]} \calR_a\otimes_{\QQ[\beta]} \calR_b.
\]
This construction is analogous to the one of the ring $\calR_\infty$ introduced as an algebraic model for the torus equivariant cohomology of full-flag varieties of type B, C, and D in \cite{IkedaMihalceaNaruse}. There are two (called {\it left\/} and {\it right\/}) commuting actions of $W_\infty= \bigcup_{n\geq 1}W_{n}$ on $\calK_\infty$. The left action commutes with $\calR_a$ while the right one commutes with $\calR_b$. The ring $\calK^{(k)}_\infty$ is then the invariant subring with respect to the right action of the group  $W_{(k)}=\bigcup_{n> k}W_{n,k}$. In particular, the invariant ring $\calK^{(0)}_\infty$ coincides with $\GGamma \otimes_{\QQ[\beta]}\calR_b$.

The first main result is the following algebraic realization of $\bbK^{\fin}_T(\IG^k)$, which we obtain by comparing the GKM descriptions of $\calK^{(k)}_\infty$ and $\CK_{T_n}(\IG^k(n))$.

\vspace{3mm}
\noindent{\bf Theorem} (Theorem \ref{thm:iso} below)
{\it 
There are isomorphisms of graded $\calR_b$-algebras
\[
\calK^{(k)}_\infty \cong \bbK^{\fin}_T(\OG^k) \ \ \mbox{ and } \ \ \ 
\calK^{(k)}_\infty \cong \bbK^{\fin}_T(\SG^k).
\]
}

\vspace{-5mm}
The second main result is the description of the those elements in  $\calK_\infty^{(k)}$ corresponding to $[\Omega_\lambda]_T$ under the above isomorphisms. For each $k$-strict partition $\lambda$, we define the functions ${}_k\GB_{\!\lambda}$ and ${}_k\GC_{\lambda}$ in $\calK_\infty^{(k)}$ via a Pfaffian formula (Definition \ref{dfGXPf}) analogous to the one obtained in \cite{HIMN} for the Schubert classes for $\CK_{T_n}(\IG^k(n))$ (cf.\! Theorem \ref{thmPf}). This allows us to show that those functions correspond to the limits of the Schubert classes (Theorem \ref{thm:iso}). Note that the Pfaffian formula in \cite{HIMN} describes the Schubert classes in terms of the {\it (equivariantly shifted) special classes} which we also identify with certain functions in $\calK_{\infty}^{(k)}$ (Lemma \ref{propGT=Seg}). It is worth mentioning that when $\beta=0$, the functions ${}_k\GC_{\lambda}$ specialize to Wilson's double $\vartheta$-functions \cite{WilsonThesis} which coincide with the type C double Schubert polynomials (see \cite{IkedaMatsumura}).

For the maximal isotropic Grassmannians in type B and C, the isomorphisms in the above theorem were obtained in \cite{IkedaNaruse}. There have also been introduced the so-called $\GP$- and $\GQ$-functions in a combinatorially explicit formula, which represent the $K$-theory Schubert classes. Thus by our result we find that, in the case $k=0$,  they coincide with ${}_0\GB_{\!\lambda}$ and ${}_0\GC_{\lambda}$ respectively, hence  establishing their Pfaffian formula simultaneaously. 


Kirillov--Naruse \cite{KrNr} constructed the double Grothendieck polynomials representing the equivariant $K$-theoretic Schubert classes for the full-flag varieties of type B and C. One can prove that our new functions consist of an appropriate subfamily of their polynomials, simultaneously establishing their explicit closed formula in the form of Pfaffians. We also remark that by their construction, those functions are polynomials in $x$, $a$, $b$ variables and $\beta$ with coefficients in $\ZZ_{\geq 0}$, once we choose a positive integer $n$ and set $x_i=0$ for all $i > n$. 

Given the previous remark related to Kirillov--Naruse's constructions, let us mention some combinatorial problems that still remain with regard to our $\GC$- and $\GB$-functions.
When $k=0$, it was proved in \cite{IkedaNaruse} that $\GQ$- and $\GP$-functions are given in terms of {\it shifted set-valued tableaux}, generalizing the classical tableaux formula of Schur $Q$- and $P$-functions (cf. \cite[\S III.8]{MacdonaldHall}). When $\beta=0$ and $b_i=0$ for all $i$ but for general $k$, Tamvakis \cite{Tamvakis2011Crelle} obtained an analogous combinatorial formula for the theta polynomials. Thus it would be an interesting problem to find similar combinatorial formulas for our $\GC$- and $\GB$-functions. 
It is also worth mentioning that, again from Kirillov--Naruse's construction, we can deduce the fact that $\GC$-functions ({\it resp.} $\GB$-functions) can be expanded into a linear combination of $\GQ$-functions ({\it resp.} $\GP$-functions) with coefficients in $\ZZ[\beta][a,b]$. It is an open problem to find an explicit formula for the coefficients of the expansions. Note that when $\beta=0$, such formula has been known in the work of Buch--Kresch--Tamvakis \cite[Theorem 4]{BuchKreschTamvakis1} and Tamvakis--Wilson \cite[Corollary 2]{TamvakisWilson}.

Finally, we note that recently Anderson \cite{Anderson2016} proved a formula describing a larger family of $K$-theoretic Schubert classes including the case of type D isotropic Grassmannians, generalizing our results for type B and C in \cite{HIMN}. Given this, we expect that it is possible to define the functions analogous to $\GC$ and $\GB$ explicitly, extending the main results of this paper to type D.

This paper is organized as follows. In Section \ref{sec2}, we recall the combinatorics related to the Weyl group of type B and C. In Section \ref{sec3}, we define the rings $\GGamma$, $\calK_{\infty}$ and $\calK_{\infty}^{(k)}$ and introduce the functions ${}_k\GB_{\!\lambda}$ and ${}_k\GC_{\lambda}$. In Section \ref{sec4}, we recall the definitions of the odd orthogonal and symplectic Grassmannians together with the torus equivariant $K$-theory Schubert classes. In Section \ref{sec5}, we compare the GKM descriptions of $\calK_{\infty}^{(k)}$ and $\CK_{T_n}(\OG^k(n))$ and define a map relating those two algebras. Finally we establish our main results, namely the isomorphisms in the above theorem, and the fact that ${}_k\GB_{\!\lambda}$ and ${}_k\GC_{\lambda}$ represent the Schubert classes.

\section{Preliminary on combinatorics of type B and C}\label{sec2}
We fix notation on Weyl groups, root systems, and $k$-strict partitions.
\subsection{Weyl group and root systems}\label{sec2-1}
Let $W_{\infty}$ be the infinite hyperoctahedral group, which is defined by the generators  $s_i, i=0,1,\ldots$, and the relations 
\begin{equation}\label{coxeter relation}
\begin{array}{c}
s_i^2=e \ \ \ \ (i\geq 0) ,\ \ \ \ \ \ \ s_is_j=s_js_i\ \ \ \ (|i-j|\geq 2),\\
s_0s_1s_0s_1=s_1s_0s_1s_0, \ \  s_is_{i+1}s_i=s_{i+1}s_is_{i+1}\ \ \ \ (i\geq 1).\\
\end{array}
\end{equation}
We identify $W_{\infty}$ with the group of {\it signed permutations}, {\it i.e.} all permutations $w$ of $\ZZ \backslash \{0\}$ such that $w(i)\not= i$ for only finitely many $i\in \ZZ \backslash \{0\}$, and $\overline{w(i)}=w(\bar{i})$ for all $i$. In this context $\bar{i}$ stands for $-i$. The generators, often referred to as \emph{simple reflections}, are identified with the transpositions $s_{0}=(1,\bar{1})$ and $s_{i}=(i+1,i)(\overline{i},\overline{i+1})$ for $i\geq 1$. The \emph{one-line notation} of an element $w\in W_\infty$ is the sequence $w=(w(1),w(2),w(3),\cdots)$.  The \textit{length} of $w\in W_\infty$ is denoted by $\ell(w)$.

For each nonnegative integer $k$, let $W_{(k)}$ be the subgroup of $W_{\infty}$ generated by all $s_i$ with $i\not=k$. Let $W_\infty^{(k)}$ be the set of minimum length coset representatives for $W_{\infty}/W_{(k)}$, which is described as
\[
W_\infty^{(k)} = \{ w \in W_\infty \ |\ \ell(ws_i)>\ell(w) \mbox{ for all } i\not=k\}.
\]
An element of $W_\infty^{(k)}$ is called \emph{$k$-Grassmannian} and it is given by  the following one-line notation:  
\begin{equation}\label{oneline for k grass}
\begin{array}{c}
w=(v_1,\cdots, v_k, \overline{\zeta_1},\cdots, \overline{\zeta_s}, u_1,u_2,\cdots);\\
0<v_1<\cdots <v_k, \ \ \ \overline{\zeta_1}<\cdots< \overline{\zeta_s} <0<u_1<u_2<\cdots,
\end{array}
\end{equation}
where $s\geq 0$. For example, $(1,3,\bar4, 2,5,6,7,\cdots)$ is a $2$-Grassmannian element in $W_{\infty}$. 

Upon a choice of an integer $n\geq 1$, we let $W_n$ be the subgroup of $W_{\infty}$ generated by $s_0, s_1,\dots, s_{n-1}$. Or, equivalently, it consists of the elements $w \in W_{\infty}$ such that $w(i)=i$ for all $i>n$. We write the one-line notation of $w \in W_n$ as the finite sequence $(w(1),w(2),\cdots, w(n))$. We set $W_{n,(k)}:=W_n \cap W_{(k)}  $ and $W_n^{(k)}:=W_n \cap W_\infty^{(k)}$ so that $W_n^{(k)}\cong W_n/W_{n,(k)}$.

Let $L:=\bigoplus_{i=1}^\infty \ZZ \varepsilon_i$ be the free module generated by $\varepsilon_i, i\in \ZZ_{>0}$. We define the set $\Delta^+$ of {\em positive roots} in $L$ as 
\begin{eqnarray*}
&&\Delta^+ := \{ \varepsilon_i, 1\leq i \} \cup \{\varepsilon_j \pm \varepsilon_i\ |\ 1\leq i <  j\} \ \ \ \mbox{for type B and}\\
&&\Delta^+ := \{ 2\varepsilon_i, 1\leq i \} \cup \{\varepsilon_j \pm \varepsilon_i\ |\ 1\leq i <  j\} \ \ \ \mbox{for type C.}
\end{eqnarray*}
Let $s_\alpha\in W_\infty$ denote the simple reflection associated with the positive root $\alpha\in \Delta^+$. Let $\Delta^+_n:= \Delta^+\cap \ \Span_{\ZZ}\{\varepsilon_1,\dots, \varepsilon_n\}$, then for $\alpha \in \Delta^+_n$, the simple reflection $s_{\alpha}$ is an element of $W_n$.
\subsection{$k$-strict partitions}\label{sec2-2}
Fix a nonnegative integer $k$. A {\it $k$-strict partition} is an infinite sequence $(\lambda_1,\lambda_2,\cdots)$ of non-increasing nonnegative integers such that all but finitely many $\lambda_i$'s are zero, and such that $\lambda_i>k$ implies $\lambda_i>\lambda_{i+1}$. The {\it length} of a $k$-strict partition is the largest index $r$ such that $\lambda_r\not=0$. We often denote a $k$-strict partition $\lambda$ with length at most $r$ by $\lambda=(\lambda_1,\dots, \lambda_r)$. We use the following notation for the sets of $k$-strict partitions:
\begin{eqnarray*}
\SP^{k}&:& \mbox{the set of all $k$-strict partitions; }\\
\SP_r^{k}&:&   \mbox{the set of all $k$-strict partitions with length at most $r$;}\\
\SP^{k}(n)&:&   \mbox{the set of all $k$-strict partitions with length at most $n-k$ and $\lambda_1\leq n+k$}.
\end{eqnarray*}
 There is a bijection from $W_{\infty}^{(k)}$ to $\SP^{k}$ given as follows. Let $w \in W_\infty^{(k)}$ be an element with the one-line notation (\ref{oneline for k grass}). Let $\nu=(\nu_1,\nu_2,\dots)$ be a partition given by $\nu_i=\sharp\{p \ |\ v_p>u_i\}$. We define a $k$-strict partition $\lambda$ by setting $\lambda_i=\zeta_i+k$ if $1\leq i\leq s$ and  $\lambda_i=\nu_{i-s}$ if  $s+1\leq i$. This bijection can be restricted to $W_n^{(k)}$, {\it i.e.} $W_n^{(k)} \cong \SP^k(n)$. See Buch--Kresch--Tamvakis \cite{BuchKreschTamvakis1} for details. Through this bijection $W_\infty^{(k)}\cong \SP^k$, the Weyl group $W_\infty$ acts naturally on $\SP^k$. Similarly, $W_n$ acts on $\SP^k(n)$ via the bijection $W_n^{(k)}\cong \SP^k(n)$.

\section{Rings of double Grothendieck polynomials
}\label{sec3}
We first define the ring $\GGamma$ and introduce its formal basis $\{\GP_\lambda(x)\}_{\lambda\in \SP^0}$. The ring $\calK_\infty$ of double Grothendieck polynomials is then introduced. On this ring we have a natural (right) action of $W_\infty$. For each nonnegative integer $k$, we define the functions ${}_k\GC_\lambda$ and ${}_k\GB_{\!\!\lambda}$ associated to all $\lambda\in \SP^k$ as elements in the invariant subring $\calK^{(k)}_\infty$ with respect to the action of the subgroup $W_\infty^{(k)}$ of $W_\infty$.
\subsection{The ring $\GGamma$ and its formal basis}\label{sec3-1}
Let $x=(x_1,x_2,\ldots)$ be a sequence of  indeterminates of degree $1$. Let $\QQ[\beta]$ be the polynomial ring in a variable $\beta$ with $\deg \beta =-1$. Denote by $\QQ[\beta][[x]]_m$  the set of all power series in $x$ with coefficients in $\QQ[\beta]$ of degree $m  \in \ZZ$. Then $\QQ[\beta][[x]]_{\gr}:=\bigoplus_{m\in \ZZ}\QQ[\beta][[x]]_{m}$ is a graded $\QQ[\beta]$-algebra, called \emph{the ring of graded formal power series}. We denote the ring of graded formal power series in a finite sequence $(x_1,\dots, x_n)$ of variables by $\QQ[\beta][[x_1,\dots, x_n]]_{\gr}$. Throughout the paper, we denote, for any variables $u$ and $v$, 
\[
u\oplus v:=u+v+\beta uv, \ \ \ u\ominus v := \frac{u-v}{1+\beta v}, \ \ \ \bar u:=\frac{-u}{1+\beta u}.
\]
where $1+\beta u = \sum_{m\geq 0} \beta^m u^m$.
\begin{defn}[\cite{IkedaNaruse}]\label{GGcond}
We denote by $\GGamma_n$ the graded subring of $\QQ[\beta][[x_1,\ldots,x_n]]_{\gr}$ whose elements are the series $f(x)$ such that:
\begin{itemize}
\item[(1)] $f(x)$ is symmetric in $x_1,\ldots,x_n$;
\item[(2)] $f(t,\bar t,x_3,x_4,\dots,x_n)=f(0,0,x_3,x_4,\dots,x_n)$.
\end{itemize}
\end{defn}
These rings form a projective system with respect to the degree preserving homomorphism $\GGamma_{n+1}\to \GGamma_n$ given by $x_{n+1}=0$. Let us denote by $\GGamma$ its graded projective limit. We can identify $\GGamma$ with the subring of $\QQ[\beta][[x]]_{\gr}$ defined by the conditions (1)' $f(x)$ is symmetric in $x_i, i\in \ZZ_{>0}$ and (2)' $f(t,\bar t,x_3,x_4,\dots)=f(0,0,x_3,x_4,\dots)$, analogous to (1) and (2) above. Note that our $\GGamma$ is a completion of the one defined in \cite{IkedaNaruse}. 
\begin{defn}[\cite{IkedaNaruse}]
For each strict partition $\lambda=(\lambda_1,\dots,\lambda_r)$ of length $r$ in $\SP_n^0$, the corresponding $GP$-functions are defined by  
\[
\GP_{\lambda}(x_1,\dots,x_n) = \frac{1}{(n-r)!} \sum_{w\in S_n} w\left[ x_1^{\lambda_1}\cdots x_r^{\lambda_r}\prod_{i=1}^r\prod_{j=i+1}^n\frac{x_i\oplus x_j}{x_i\ominus x_j} \right].
\]

\end{defn}
The polynomial $\GP_{\lambda}(x_1,\dots,x_n)$ is an element of $\GGamma_n$ for all $n$, and these define an element $\GP_{\lambda}(x)$ in $\GGamma$ as the limit.

The next proposition will be used in the proof of Lemma \ref{lemfullinj} at $\S\ref{sec5}$.
\begin{prop}\label{propfb}
Any homogeneous element $f(x)$ of $\GGamma$ with degree $m$ is uniquely expressed as a possibly infinite linear combination
\[
f(x) = \sum_{\lambda \in \SP^0} c_{\lambda} \GP_{\lambda}(x), \ \ \ c_{\lambda} \in \QQ[\beta]_{m-|\lambda|}.
\]
\end{prop}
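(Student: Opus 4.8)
The plan is to argue by descending on degree via a leading-term / triangularity argument, exactly as one proves that Schur $P$-functions form a basis. First I would record the key structural fact: for a strict partition $\lambda$ of length $r$, the function $\GP_\lambda(x)$ has lowest-degree term equal to the Schur $P$-function $P_\lambda(x)$ (the $\beta=0$ specialization), and more importantly the monomial $x_1^{\lambda_1}\cdots x_r^{\lambda_r}$ appears in $\GP_\lambda$ with coefficient $1$, while every monomial $x^\mu$ occurring in $\GP_\lambda$ (in any $\beta$-degree) has $\mu$ a composition whose sorted shape $\bar\mu$ satisfies $|\bar\mu|\le|\lambda|$ and, among those with $|\bar\mu|=|\lambda|$, one has $\bar\mu\le\lambda$ in dominance order with equality only for $\bar\mu=\lambda$. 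This follows by inspecting the defining sum over $S_n$ and expanding the factors $\frac{x_i\oplus x_j}{x_i\ominus x_j}=(1+\beta x_i)(1+\beta x_j)\cdot\frac{x_i+x_j+\beta x_ix_j}{x_i-x_j}$ as a power series; the denominators cancel after symmetrization, each extra factor of $\beta$ is accompanied by an extra $x$, and the "straightening" under the symmetric group can only move weight from larger parts to smaller ones.

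\textbf{Existence of the expansion.} Given a homogeneous $f\in\GGamma$ of degree $m$, I would build the coefficients $c_\lambda$ recursively. Fix any term order refining: first compare $|\bar\mu|$ (larger is "earlier"), then compare $\bar\mu$ in dominance, then break ties arbitrarily. Among the monomials $x^\mu$ actually occurring in $f$, only finitely many have any fixed value of $|\bar\mu|$ (because $f$ has bounded degree in $\beta$ in each... — more carefully: $f$ is a graded power series of total degree $m$, so a monomial $x^\mu \beta^t$ in $f$ has $|\mu|-t=m$, hence $t=|\mu|-m\ge 0$ forces $|\mu|\ge m$, and for each fixed $|\mu|$ there are finitely many $\mu$). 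Pick the "earliest" surviving monomial $x^\mu$ in $f$; by the symmetry condition (1)' we may assume $\mu=\lambda$ is a partition, and by condition (2)' one checks $\lambda$ is strict (if $\lambda_i=\lambda_{i+1}$ were allowed with $\lambda_i>0$, substituting $x_i=t,x_{i+1}=\bar t$ and using (2)' would force the coefficient of this leading monomial to vanish — this is precisely the cancellation property, and is where $\SP^0$ enters). Subtract $c_\lambda\GP_\lambda$ with $c_\lambda$ the coefficient of $x^\lambda$ in $f$; by the triangularity above, this kills the monomial $x^\lambda$ and introduces only strictly-later monomials. Iterating, at each stage the set of surviving "earliest" monomials strictly decreases in the well-order, and for each fixed total $x$-degree the process terminates in finitely many steps; passing to the (graded projective) limit over the finitely-many-variables truncations assembles a well-defined, possibly infinite sum $\sum_\lambda c_\lambda\GP_\lambda$ converging to $f$ in $\GGamma$, with $c_\lambda\in\QQ[\beta]_{m-|\lambda|}$ by degree count.

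\textbf{Uniqueness.} Suppose $\sum_{\lambda\in\SP^0}c_\lambda\GP_\lambda=0$ with not all $c_\lambda$ zero. Choose $\lambda$ with $c_\lambda\ne0$ that is "earliest" in the well-order among those with $c_\lambda\ne0$ (again this is well-defined: fixing the $\beta$-adic contribution, finitely many competitors). By the triangularity, the monomial $x^\lambda$ occurs in $c_\lambda\GP_\lambda$ with coefficient $c_\lambda\ne0$ and occurs in no other $c_\mu\GP_\mu$ with $\mu$ later — wait, I must ensure no \emph{earlier} $\mu$ contributes to $x^\lambda$; but "earliest $\mu$ with $c_\mu\ne0$" means all contributing $\mu$ are later than $\lambda$, and later $\mu$ produce only monomials later than $x^\mu$, hence later than $x^\lambda$, so none of them hits $x^\lambda$. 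Therefore the coefficient of $x^\lambda$ in the sum is $c_\lambda\ne0$, a contradiction. Hence all $c_\lambda=0$.

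\textbf{Main obstacle.} The delicate point is not the abstract triangularity but making the ordering/finiteness bookkeeping airtight in the completed ring $\GGamma$ (infinitely many variables, infinitely many partitions, coefficients in the graded ring $\QQ[\beta]$ which itself is infinite-dimensional), so that "take the earliest surviving monomial" and "the recursion converges" are legitimate — this amounts to checking that the natural partial order on monomials, refined as above, is a well-order on the support of any fixed homogeneous element, and that subtracting off leading $\GP_\lambda$'s does not create an infinite regress within a single $x$-degree. Establishing the precise leading-term statement for $\GP_\lambda$ — that $x^\lambda$ appears with coefficient exactly $1$ and everything else is strictly later — is a finite computation with the generating factors $\frac{x_i\oplus x_j}{x_i\ominus x_j}$ but needs to be stated carefully; I would isolate it as a preliminary lemma before running the two-part argument above.
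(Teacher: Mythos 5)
Your overall strategy --- a leading-monomial/triangularity induction, with the cancellation property forcing the leading shape to be strict --- is the standard way to prove such basis statements, and it is a genuinely different, more self-contained route than the paper's: the paper simply reduces the claim to the finite-variable fact that $\GP_{\lambda}(x_1,\dots,x_n)$, $\lambda\in\SP_n^{0}$, form a formal basis of $\GGamma_n$ (a completed version of Theorem 3.1 of Ikeda--Naruse) and then passes to the graded projective limit, leaving all the triangularity bookkeeping to the cited reference.

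However, your preliminary lemma is stated with the degree inequality running the wrong way, and this propagates into an order convention under which your induction cannot even start. Since $\deg\beta=-1$ and $\GP_{\lambda}$ is homogeneous of degree $|\lambda|$, a monomial $\beta^{t}x^{\mu}$ occurring in it has $|\mu|=|\lambda|+t\ge|\lambda|$, not $|\bar\mu|\le|\lambda|$; for instance $\GP_{(1)}(x_1,x_2)=x_1\oplus x_2=x_1+x_2+\beta x_1x_2$ contains the monomial $x_1x_2$ of $x$-degree $2$. Correspondingly, with your convention that larger $|\bar\mu|$ is ``earlier,'' the support of a homogeneous $f$ of degree $m$ has no earliest element, because (as your own parenthetical computation shows) the $x$-degrees occurring in $f$ are exactly those $\ge m$ and are unbounded above. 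Both slips are repaired simultaneously by declaring \emph{smaller} $|\bar\mu|$ to be earlier: then $x^{\lambda}$ is the unique earliest monomial of $\GP_{\lambda}$ (its $\beta^{0}$-part being the Schur $P$-function, which is unitriangular in dominance order), the refined order is a well-order on the support of any homogeneous element, and your existence and uniqueness arguments go through essentially as written. The one step you should not leave implicit is the strictness of the leading shape: that the dominance-maximal partition of minimal size occurring in an element of $\GGamma$ has distinct nonzero parts is exactly the classical consequence of the $Q$-cancellation property applied to the $\beta^{0}$-part, and it deserves to be stated as a preliminary lemma alongside the (corrected) triangularity.
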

\begin{proof}
The claim follows formally from the fact that $\GP_{\lambda}(x_1,\dots, x_n) \ (\lambda\in \SP_n^0)$ form a formal basis of $\GGamma_n$. That is, a homogeneous element $f(x_1,\dots,x_n)$ in $\GGamma_n$ of degree $r$ is uniquely expressed as a possibly infinite linear combination
\[
f(x_1,\dots, x_n) = \sum_{\lambda \in \SP_n^0} c_{\lambda} \GP_{\lambda}(x_1,\dots,x_n), \ \ \ c_{\lambda} \in \QQ[\beta]_{r-|\lambda|}.
\]
This fact is a slight modification of Theorem 3.1 \cite{IkedaNaruse}: here we must work in the ring of graded formal power series. We leave the details to the reader since it is parallel to the original one.
\end{proof}
\subsection{Rings of double Grothendieck polynomials}\label{sec3-2}
For infinite sequences of variables $a=(a_1,a_2,\dots)$ and $b=(b_1,b_2,\dots)$, consider the rings
\begin{eqnarray*}
\calR_a&:=& \bigcup_{m=0}^{\infty} \QQ[\beta][[a_1,\dots, a_m]]_{\gr}, \ \ \ \calR_b:= \bigcup_{m=0}^{\infty} \QQ[\beta][[b_1,\dots,b_m]]_{\gr}.
\end{eqnarray*}
and define the $\calR_b$-algebra $\calK_{\infty}$ by
\[
\calK_{\infty}:=\GGamma\otimes_{\QQ[\beta]}\calR_a \otimes_{\QQ[\beta]} \calR_b.
\]

We define an action of $W_{\infty}$ on $\calK_\infty$ as follows. 
For $f(x;a)\in \GGamma\otimes_{\QQ[\beta]}\calR_a$ we set
\begin{eqnarray*}
(s_0 f)(x;a)&=&f(a_1,x_1,x_2,\ldots; \overline{a_1}, a_2, \ldots)\\
(s_i f)(x;a)&=&f(x_1,x_2,\ldots; a_1, a_2, \ldots,a_{i+1},a_i,\ldots)\quad (i\geq 1)
\end{eqnarray*}
and extend these as automorphisms of $\calR_b$-algebras. One can check that this gives an action of $W_\infty$ on $\calK_\infty$. For example, $s_0^2=1$ follows from the definition of $\GGamma$. We call this the {\it right} action, while the {\it left} action is similarly defined by replacing the roles of $a$ and $b$. In this paper, we only use the right one.


Now for each $k\geq 0$, we define $\calK_{\infty}^{(k)}$ to be the subalgebra of $\calK_{\infty}$ invariant under the $W_{(k)}$-action:
\[
\calK_{\infty}^{(k)} := \calK_{\infty}^{W_{(k)}} = (\GGamma\otimes_{\QQ[\beta]}\calR_a)^{W_{(k)}}  \otimes_{\QQ[\beta]} \calR_b.
\] 
Note that the subring $(\GGamma\otimes_{\QQ[\beta]}\calR_a)^{W_{(k)}}$ of $\GGamma\otimes_{\QQ[\beta]}\calR_a$ invariant under the action of $W_{(k)}$, is contained in $\GGamma[[a_1,\dots,a_k]]_{\gr}$ since each element of $\calR_a$ involves only finitely many $a_i$'s.
\begin{rem}
The ring $\calK_{\infty}$ (resp. $\calK_{\infty}^{(k)}$) is the $K$-theoretic version of $\calR_{\infty}$ introduced in \cite{IkedaMihalceaNaruse} (resp. $\calR_\infty^{(k)}$ in \cite{IkedaMatsumura}). The corresponding \emph{double Grothendieck polynomials} constructed by Kirillov-Naruse \cite{KrNr} represent the $K$-theoretic equivariant Schubert classes. 
\end{rem}
\subsection{Definitions of $\GC_{\lambda}$ and $\GB_{\lambda}$}\label{sec3-3}
\begin{defn}\label{GTsingle}
We introduce the functions ${}_k\GC_m(x,a)\in \GGamma[[a_1,\dots,a_k]]_{\gr}$ $(m\in \ZZ)$ by the following generating function
\begin{eqnarray*}
{}_k\GC(x,a;u):=\sum_{m\in \ZZ} {}_k\GC_m(x,a) u^m =\frac{1}{1+\beta u^{-1}} \prod_{i=1}^{\infty} \frac{1 + (u+\beta) x_i}{1 + (u+\beta)\bar x_i}\prod_{i=1}^{k} (1 + (u+\beta) a_i).
\end{eqnarray*}
\end{defn}
\begin{lem}\label{lemGTinv} 
The functions ${}_k\GC_m(x,a)$ are elements of $(\GGamma\otimes_{\QQ[\beta]}\calR_a)^{W_{(k)}}$ for all $m\in \ZZ$.
\end{lem}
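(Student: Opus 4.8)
The plan is to verify the two defining conditions of $(\GGamma\otimes_{\QQ[\beta]}\calR_a)^{W_{(k)}}$ directly from the generating function in Definition~\ref{GTsingle}, namely: (i) each coefficient ${}_k\GC_m(x,a)$ lies in $\GGamma[[a_1,\dots,a_k]]_{\gr}$, i.e.\ it is symmetric in the $x_i$'s and satisfies the cancellation property $f(t,\bar t,x_3,\dots)=f(0,0,x_3,\dots)$; and (ii) it is invariant under $W_{(k)}$, the subgroup generated by the $s_i$ with $i\neq k$.

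For (i), symmetry in the $x_i$ is immediate since the infinite product $\prod_i \frac{1+(u+\beta)x_i}{1+(u+\beta)\bar x_i}$ is manifestly symmetric. For the cancellation property, I would observe that setting $x_1=t$, $x_2=\bar t$ contributes the factor
\[
\frac{1+(u+\beta)t}{1+(u+\beta)\bar t}\cdot\frac{1+(u+\beta)\bar t}{1+(u+\beta)\,\overline{\bar t}}
=\frac{1+(u+\beta)t}{1+(u+\beta)\,\overline{\bar t}},
\]
and since $\overline{\bar t}=t$ (as $\bar u = -u/(1+\beta u)$ is an involution), this factor equals $1$; hence the whole generating function is unchanged, and so is every coefficient. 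One should also note that the degree bookkeeping works: $\deg\beta=-1$, $\deg x_i=\deg a_i=1$, $\deg u=1$, so each ${}_k\GC_m(x,a)$ is homogeneous of degree $m$, and the prefactor $\frac{1}{1+\beta u^{-1}}=\sum_{j\ge 0}(-\beta)^j u^{-j}$ together with the expansions of the products shows each coefficient is a well-defined graded formal power series. The membership in $\GGamma[[a_1,\dots,a_k]]_{\gr}$ rather than just $\GGamma\otimes\calR_a$ is clear since only $a_1,\dots,a_k$ appear.

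For (ii), the generators $s_1,\dots,s_{k-1}$ act by permuting $a_1,\dots,a_k$ among themselves, and $\prod_{i=1}^k(1+(u+\beta)a_i)$ is symmetric in these, so invariance under $s_1,\dots,s_{k-1}$ is immediate. For $i\geq k+1$, $s_i$ swaps $a_i\leftrightarrow a_{i+1}$, neither of which appears in ${}_k\GC$, so these act trivially. The only case requiring a small computation is $s_0$, which sends $(x;a)\mapsto(a_1,x_1,x_2,\dots;\bar a_1,a_2,\dots)$: one checks that the factor $\frac{1+(u+\beta)a_1}{1}$ from the $a$-product gets replaced by $\frac{1+(u+\beta)x_1'}{1+(u+\beta)\bar x_1'}$ with $x_1'=a_1$ coming from the newly-inserted $x$-variable, while the old $a_1$ becomes $\bar a_1$ in the $a$-product — i.e.\ the new $a$-product contributes $(1+(u+\beta)\bar a_1)\prod_{i=2}^k(1+(u+\beta)a_i)$. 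The identity to verify is then
\[
\bigl(1+(u+\beta)a_1\bigr)=\frac{1+(u+\beta)a_1}{1+(u+\beta)\bar a_1}\cdot\bigl(1+(u+\beta)\bar a_1\bigr),
\]
which holds trivially. Hence the generating function, and therefore every coefficient, is $s_0$-invariant; since $k\neq 0$ means $s_0\in W_{(k)}$, we are done. (When $k=0$ the factor $\prod_{i=1}^k$ is empty and $s_0\notin W_{(k)}$, so the argument still applies but $s_0$-invariance is not required.)

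I do not anticipate a serious obstacle: the proof is essentially the observation that the generating function is visibly $W_{(k)}$-invariant and satisfies the $\GGamma$-cancellation, with the only subtlety being careful tracking of how $s_0$ simultaneously inserts $a_1$ as a new $x$-variable and replaces $a_1$ by $\bar a_1$. The mildest care needed is to confirm that the operations are compatible with passing to graded formal power series coefficients, i.e.\ that the formal manipulations of infinite products are legitimate degree by degree; this is routine given the grading conventions already set up in \S\ref{sec3-1}.
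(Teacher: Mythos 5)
Your proof is correct and follows essentially the same route as the paper's: verify the two $\GGamma$-conditions (symmetry in $x$ and the cancellation property via $\overline{\bar t}=t$), symmetry in $a_1,\dots,a_k$ for the generators $s_1,\dots,s_{k-1}$, triviality of $s_i$ for $i>k$, and $s_0$-invariance by cancelling the inserted factor $\frac{1+(u+\beta)a_1}{1+(u+\beta)\bar a_1}$ against the replacement $a_1\mapsto\bar a_1$ in the $a$-product — you merely spell out the computations the paper dismisses as ``obvious from definition.'' One nitpick in your degree-bookkeeping aside: for $u+\beta$ to be homogeneous one must take $\deg u=-1$, not $+1$, though your conclusion that ${}_k\GC_m(x,a)$ is homogeneous of degree $m$ is nevertheless correct.
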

\begin{proof}
First, we observe that ${}_k\GC(x,a;u)$ satisfies the conditions (1)' and (2)'. Thus ${}_k\GC_m(x,a)$ are in $\GGamma\otimes_{\QQ[\beta]}\calR_a$. Next we see that ${}_k\GC(x,a;u)$ is symmetric in $a_1,\dots, a_k$ and hence ${}_k\GC_m(x,a)$ are  invariant under the actions of $s_1^{a}, \dots,s_{k-1}^{a}$. Furthermore, it is obvious from definition that $s_0^{a}({}_k\GC(x,a;u)) = {}_k\GC(x,a;u)$. Thus ${}_k\GC_m(x,a)$ are also invariant under the action of  $s_0^a$ (cf.  \cite[Proposition 5.1]{IkedaMatsumura}).
\end{proof}
Lemma \ref{lemGTinv} allows us to define functions ${}_k\GC_m^{(\ell)}(x,a|b)$ and ${}_k\GB_m^{(\ell)}(x,a|b)$ in $\calK_{\infty}^{(k)}$ as follows.
\begin{defn}\label{df:GCGBdouble}
We define the elements ${}_k\GC_m^{(\ell)}(x,a|b)$ and ${}_k\GB_m^{(\ell)}(x,a|b)$ in $\calK_{\infty}^{(k)}$ $(m,\ell\in \ZZ)$ by the following generating functions
\begin{eqnarray*}
\sum_{m\in \ZZ} {}_k\GC_m^{(\ell)}(x,a|b) u^m  
&=&\begin{cases}
{}_k\GC(x,a;u) \displaystyle\prod_{i=1}^{|\ell|}\dfrac{1}{1 + (u+\beta)\bar b_i} & (\ell <0),\\
{}_k\GC(x,a;u) \displaystyle\prod_{i=1}^{\ell}(1 + (u+\beta) b_i)& (\ell \geq 0),
\end{cases}\\
\sum_{m\in \ZZ} {}_k\GB_m^{(\ell)}(x,a|b) u^m 
&=& \begin{cases}
{}_k\GC(x,a;u) \displaystyle\prod_{i=1}^{|\ell|}\dfrac{1}{1 + (u+\beta)\bar b_i} & (\ell <0),\\
\displaystyle\frac{1}{2+\beta u^{-1}}{}_k\GC(x,a;u) \displaystyle\prod_{i=1}^{\ell}(1 + (u+\beta) b_i)& (\ell \geq 0).
\end{cases}
\end{eqnarray*}
We denote ${}_k\GX_m^{(\ell)}={}_k\GC_m^{(\ell)}(x,a|b)$ for type C and ${}_k\GX_m^{(\ell)}={}_k\GB_m^{(\ell)}(x,a|b)$ for type $B$. 
\end{defn}
\begin{rem}\label{rem:GTminus}
A direct computation shows that ${}_k\GC_m^{(\ell)}(x,a|b) = (-\beta)^{-m}$ for each $m\leq 0$. Moreover, if $\ell\geq 0$, we have
\[
{}_k\GB_m^{(\ell)}(x,a|b)= \frac{1}{2} \sum_{s\geq 0} \left(\displaystyle\frac{-\beta}{2}\right)^s{}_k\GC_{m+s}^{(\ell)}(x,a|b).
\]
\end{rem}
In order to define $\GC_{\lambda}$ and $\GB_{\lambda}$ in terms of Pfaffians, we prepare some notations related to $k$-strict partitions.
\begin{defn}
For $\lambda\in \SP^{k}$, we set
\begin{eqnarray*}
C(\lambda)&:=&\Big\{(i,j) \in \NN^2 \ |\ 1\leq  i < j, \ \ \lambda_i+\lambda_j>2k+j-i\Big\}, \\
\gamma_j&:=& \sharp\Big\{ i \in \NN \ |\ (i,j)\in C(\lambda)\Big\}\ \ \ \ \mbox{for each $j>0$}.
\end{eqnarray*}
and define the associated \textit{characteristic index} $\chi=(\chi_1,\chi_2,\dots)$ by 
\[
\chi_j:=\lambda_j-j+\gamma_j-k.
\]
\end{defn}

\begin{rem}
For $k=0$, a $k$-strict partition $\lambda$ is called a strict partition. In this case, one has $\chi_i=\lambda_i-1$ for all $i=1,\dots, r$,  where $r$ is the length of $\lambda$. 
\end{rem}
\begin{rem}\label{chilambda}
We have $\chi_i+\chi_j\geq0$ if and only if $\lambda_i+\lambda_j>2k+j-i$ for $i<j$ (see \cite[Lemma 3.3]{IkedaMatsumura}). As a consequence one has $C(\lambda)=\{(i,j) \ |\ 1\leq  i < j, \ \ \chi_i + \chi_j \geq 0\}$.
\end{rem}
\begin{defn}\label{dfPfSymb}
Let $\lambda$ be a $k$-strict partition of length $r$. We set 
\begin{eqnarray*}
\Delta_r &:=& \{(i,j) \ |\ 1\leq i < j\leq r\}\\
D(\lambda)  &:=& \Big\{ (i,j) \in \Delta_r\ |\  \chi_i + \chi_j < 0\Big\} = \Delta_r \backslash C(\lambda).
\end{eqnarray*}
For each $I \subset D(\lambda)$, we set
\[
a_i^I:=\sharp\{j \ |\ (i,j)\in I\},  \ \ \   c_j^I:=\sharp\{i \ |\ (i,j)\in I\}, \ \ \ \mbox{and}\ \ \ d_i^I:=a_i^I-c_i^I.
\]
Denote $m:=r$ if $r$ is even and $m:=r+1$ if $r$ is odd. Consider the following rational function of variables $t_i$ and $t_j$ ($1\leq i,j\leq m$): recall $\bar t=\frac{-t}{1+\beta t}$ and set
\[
F_{i,j}^I(t):=\frac{1}{(1 + \beta  t_i)^{m-i-c_i^I-1}} \frac{1}{(1 + \beta  t_j)^{m-j-c_j^I}} \frac{1 - \bar  t_i/\bar t_j}{1 -  t_i/\bar  t_j}.
\]
Let $\ff_{pq}^{ij,I}$ be the coefficient of the expansion of $F_{i,j}^I(t)$ as the following Laurent series
\begin{equation}\label{defLaurent}
F_{i,j}^I(t)= \sum_{p\geq 0, \atop{p+q\geq 0}} \ff_{pq}^{ij,I} t_i^pt_j^q.
\end{equation}
\end{defn}

\begin{defn}\label{dfGXPf}
Let $\lambda \in \SP^k(n)$ of length $r$ with its characteristic index $\chi$. Let $m:=r$ if $r$ is even and $m:=r+1$ if $r$ is odd. We define the element ${}_k\GX_{\lambda}\in \calK_{\infty}^{(k)}$ for type C and B by
\[
{}_k\GX_{\lambda}= \sum_{I \subset D(\lambda)} \Pf\left(\sum_{p,q\in \ZZ \atop{p\geq 0, p+q\geq 0}} \ff_{pq}^{ij,I} {}_k\GX_{\lambda_i+d_i^I+p}^{(\chi_i)}\cdot {}_k\GX_{\lambda_j+d_j^I+q}^{(\chi_j)}\right)_{1\leq i<j\leq m }.
\]
We denote ${}_k\GC_{\lambda}(x,a|b):={}_k\GX_{\lambda}(x,a|b)$ for type C and ${}_k\GB_{\lambda}(x,a|b):={}_k\GX_{\lambda}(x,a|b)$ for type B. If there is no fear of confusion, we denote ${}_k\GX_{\lambda}$ simply by $\GX_{\lambda}$. 
\end{defn}
\begin{example}
For a $k$-strict partition $\lambda=(\lambda_1)$ of length $1$, the corresponding characteristic index is $\chi=(\lambda_1-k-1)$. In this case, we have ${}_k\GC_{(\lambda_1)}(x,a|b)={}_k\GC_{\lambda_1}^{(\lambda_1-k-1)}(x,a|b)$ and ${}_k\GB_{(\lambda_1)}(x,a|b) = {}_k\GB_{\lambda_1}^{\,(\lambda_1-k-1)}(x,a|b)$.
\end{example}

\section{$K$-theoretic Schubert classes of $\IG^k(n)$
}\label{sec4}

\subsection{Preliminary on equivariant connective $K$-theory}\label{sec4-1}
First of all, let us briefly recall the some facts about torus equivariant connective $K$-theory and the $K$-theoretic Segre classes which will be needed later. 
 For more details readers are referred to \cite{Krishna}, \cite[Section 2]{HIMN} and, for an alternative presentation which has no restriction on the characteristic of the base field, to \cite[Appendix]{Anderson2016}.

Let $T_n$ be a standard algebraic torus $(\bbF^{\times})^n$. For a smooth variety $X$ with an action of $T_n$, its equivariant connective $K$-theory is denoted by $\CK^*_{T_n}(X)$. In this paper, we work with the rational coefficients $\QQ$ instead of the integral coefficients $\ZZ$. Let $E$ be a $T_n$-equivariant vector bundles, then its $i$-th $T_n$-equivariant Chern class is denoted by $c_i(E)$, while its total Chern class of $E$ is denoted by $c(E;u)=\sum_{i=0}^{\rk E} c_i(E)u^i$. If $F$ is another $T_n$-equivariant vector bundle, then the total Chern class of the virtual bundle $E-F$ is defined by $c(E-F;u)=\sum_{i\geq 0} c_i(E-F)u^i=c(E;u)/c(F;u)$. 

The equivariant connective $K$-theory $\CK^*_{T_n}(X)$ is in fact a graded algebra over $\CK^*_{T_n}(pt)$ which we identify with a ring of graded formal power series as follows. We regard $\varepsilon_1,\dots, \varepsilon_n$ as the standard basis of the character group of $T_n$. Let $L_i$ be the one dimensional representation of $T_n$ with character $\varepsilon_i$. We use the following isomorphism
\begin{equation}\label{eq:CK(pt)}
\CK^*_{T_n}(pt) \to \QQ[\beta][[b_1,\dots,b_n]]_{\gr}; \ \ c_1(L_{i}) \mapsto b_i
\end{equation}
of graded $\QQ[\beta]$-algebras (\cite[\S 2.6]{Krishna}). For simplicity, we denote $\CK^*_{T_n}:=\CK^*_{T_n}(pt)$. We regard a $\CK^*_{T_n}$-algebra as a $\QQ[\beta][[b]]_{\gr}$-algebra or $\calR_b$-algebra via the projection $\QQ[\beta][[b]]_{\gr} \to \CK^*_{T_n}$ or $\calR_b \to \CK^*_{T_n}$ defined by $b_i=0$ for all $i>n$ respectively.
\begin{rem}
The ring $\CK^*_{T_n}=\QQ[\beta][[b_1,\dots,b_n]]_{\gr}$ specializes to $K_{T_n}({pt})=\QQ[[b_1,\dots, b_n]]$ at $\beta=-1$. It can be also identified with a completion of the representation ring $R(T_n)$ of $T_n$ with rational coefficients as follows. First recall that we have
\[
R(T_n)\otimes_{\ZZ}\QQ=\QQ[e^{\pm \varepsilon_1},\dots, e^{\pm \varepsilon_n}],
\]
where the class of $L_i$ corresponds to $e^{-\varepsilon_i}$. Now $1 - e^{\varepsilon_i}$ is identified with the first Chern class $b_i=c_1(L_i)$ of $L_i$ (\textit{cf.} Krishna \cite[Theorem 7.3]{Krishna0}).
\end{rem}
Although the Segre classes can be defined geometrically, we take the following definition in terms of a generating function due to \cite{HIMN}.
\begin{defn}\label{dfrelsegre}
For a virtual bundle $E-F$, we define the {\it $T_n$-equivariant relative Segre classes} $\scS_{m}(E-F)$  $(m\in \ZZ)$  in $\CK^*_{T_n}(X)$ by 
\begin{equation}\label{segre vir}
\scS(E-F;u):=\sum_{m\in \ZZ} \scS_{m}(E-F) u^{m}= \frac{1}{1 + \beta u^{-1}} \frac{c(E - F;\beta)}{c(E-F;-u)}.
\end{equation}
\end{defn}
\begin{rem}\label{remgrlw}
The Chern classes of the connective $K$-theory are governed essentially by the {\it multiplicative formal group law} $x\oplus y = x+y+\beta xy$. Namely if $L$ and $M$ are line bundles, then $c_1(L\otimes M) = c_1(L) \oplus c_1(M)$ and $c_1(L^{\vee}) = \frac{-c_1(L)}{1+\beta c_1(L)}$. This being said, the identity $\frac{1+\beta x}{1- xu} = \frac{1}{1+(u+\beta)x}$ implies 
\[
\scS(E-F;u)= \frac{1}{1 + \beta u^{-1}} \frac{c(F^{\vee};u+\beta)}{c(E^{\vee};u+\beta)}.
\]
\end{rem}
\subsection{Schubert varieties and the stability of their classes}\label{sec4-2}
For an integer $n \geq k$, let $E^{(n)}$ be a vector space $\bbF^{2n}$ or $\bbF^{2n+1}$ of dimension $2n$ or $2n+1$ respectively. We fix bases by
\[
\bbF^{2n}=\Span\{\vece_{\bar i}, \vece_i\:|\;1\leq i\leq n\}\ \ \ \ \mbox{ and } \ \ \ 
\bbF^{2n+1}=\Span\{\vece_{\bar i}, \vece_i\:|\;1\leq i\leq n\} \oplus \Span\{\vece_0\},
\]
together with  the symplectic form $\sum_{i=1}^n \vece_i^*\wedge \vece_{\bar i}^*$ and the non-degenerate symmetric form $\vece_0^*\otimes \vece_0^*+\sum_{i=1}^n \vece_i^*\otimes \vece_{\bar i}^*$ respectively where $\{\vece_i^*\}$ denotes the dual basis of $\{\vece_i\}$. We define the action of $T_n$ on $E^{(n)}$ by setting the weights of $\vece_i$ and $\vece_{\bar i}$ to be $\varepsilon_i$ and $-\varepsilon_i$ respectively for $1\leq i\leq n$, while the weight of $\vece_0$ is $0$. This identifies $T_n$ with maximal tori of $\Sp_{2n}(\bbF)$ and $\SO_{2n+1}(\bbF)$ respectively.

Let $\IG^k(n)$ be the Grassmannians of $n-k$ dimensional isotropic subspaces in $E^{(n)}$, {\it i.e.} $\IG^k(n)$ is the symplectic Grassmannian $\SG^k(n)$ if $E^{(n)}=\bbF^{2n}$ (type C) and the odd orthogonal Grassmannian $\OG^k(n)$ if $E^{(n)}=\bbF^{2n+1}$ (type B). Consider the subspaces of $E^{(n)}$ 
\begin{eqnarray*}
&&F^{\ell} = \Span\{\vece_n,\dots, \vece_{\ell+1}\} \  \ \ (0\leq \ell \leq n),\\
&&F^{-\ell} = (F^0)^{\perp} \oplus \Span\{\vece_{\bar1},\cdots, \vece_{\bar\ell}\} \ \ \ (1\leq \ell \leq n).
\end{eqnarray*}
It is known that the Schubert varieties in $\IG^k(n)$ are described as follows. For a $k$-strict partition $\lambda\in \SP^k(n)$ with length $r$ and characteristic index $\chi$, the associated Schubert variety $\Omega_{\lambda}^X$ in $\IG^k(n)$ is given by 
\[
\Omega_{\lambda}^X = \{ U\in \IG^k(n) \ |\ \dim (U \cap F^{\chi_i}) \geq i, \ \ \  i=1,\dots, r\},
\]
where we write $X=C$ for the symplectic case, and $X=B$ for the odd orthogonal case. 


Since the Schubert variety $\Omega_{\lambda}^X$ is $T_n$-stable, it defines the $T_n$-equivariant class $[\Omega_\lambda^X]_{T_n}$ in $\CK^*_{T_n}(\IG^k(n))$. As a $\CK^*_{T_n}$-module, $\CK^*_{T_n}(\IG^k(n))$ is freely generated by $[\Omega_\lambda^X]_{T_n}, \lambda\in\SP^k(n)$. See \cite{Krishna}. 

Let $E^{(n)} \to E^{(n+1)}$ be the injective linear map defined by the inclusion of the basis elements. It induces an embedding $j_n: \IG^k(n) \to \IG^k(n+1)$, which is equivariant with respect to the corresponding inclusion $T_n \to T_{n+1}$. Consider its pullback
\[
j_n^*: \CK^*_{T_{n+1}}(\IG^k(n+1)) \to \CK^*_{T_n}(\IG^k(n)),
\]
and then we have
\begin{equation}\label{stabSch}
j_n^*[\Omega_{\lambda}^X]_{T_{n+1}} = 
\begin{cases}
[\Omega_{\lambda}^X]_{T_n} & \mbox{ if } \lambda\in \SP^k(n),\\
0 & \mbox{ if } \lambda\not\in \SP^k(n).
\end{cases}
\end{equation}
Consider the graded projective limit with respect to $j_n^*$
\[
\bbK^{\infty}_T(\IG^k):= \bigoplus_{m\in \ZZ} \lim_{\longleftarrow \atop{n}} \CK^m_{T_n}(\IG^k(n)),
\]
and then by (\ref{stabSch}) one obtains a unique element $[\Omega_{\lambda}^X]_T$ as a limit of the classes $[\Omega_{\lambda}^X]_{T_n}$. Since the Schubert classes form a $\CK^*_{T_n}$-module basis, we can conclude the following.
\begin{lem}\label{lemLimBasis}
Any element $f$ of $\bbK^{\infty}_T(\IG^k)$ can be expressed uniquely as a possibly infinite $\QQ[\beta][[b]]_{\gr}$-linear combination of the classes $[\Omega_{\lambda}^X]_T$:
\[
f = \sum_{\lambda\in \SP^k} c_{\lambda} [\Omega_{\lambda}^X]_T, \ \ \ \ \ c_{\lambda}\in \QQ[\beta][[b]]_{\gr}.
\]
\end{lem}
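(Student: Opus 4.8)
The plan is to deduce the statement from the definition of $\bbK^{\infty}_T(\IG^k)$ as a graded projective limit, together with the fact that, for each fixed $n$, the Schubert classes $[\Omega_\mu^X]_{T_n}$ with $\mu \in \SP^k(n)$ form a basis of the free $\CK^*_{T_n}$-module $\CK^*_{T_n}(\IG^k(n))$, combined with the compatibility relation \eqref{stabSch}. The key point is to organize the data degree by degree so that the projective limit can be computed explicitly.

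First I would fix a homogeneous degree $m \in \ZZ$ and recall that, by definition, an element $f$ of $\bbK^{\infty}_T(\IG^k)$ of degree $m$ is a compatible sequence $(f^{(n)})_{n \geq k}$ with $f^{(n)} \in \CK^m_{T_n}(\IG^k(n))$ and $j_n^*(f^{(n+1)}) = f^{(n)}$. For each $n$, expand $f^{(n)} = \sum_{\mu \in \SP^k(n)} c_\mu^{(n)} [\Omega_\mu^X]_{T_n}$ with $c_\mu^{(n)} \in \CK^*_{T_n} = \QQ[\beta][[b_1,\dots,b_n]]_{\gr}$; uniqueness of this expansion is exactly the freeness of the module. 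Then I would apply $j_n^*$ and use \eqref{stabSch}: since $j_n^*$ kills $[\Omega_\mu^X]_{T_{n+1}}$ for $\mu \notin \SP^k(n)$ and sends $[\Omega_\mu^X]_{T_{n+1}}$ to $[\Omega_\mu^X]_{T_n}$ for $\mu \in \SP^k(n)$, while on coefficients $j_n^*$ is induced by the map $\CK^*_{T_{n+1}} \to \CK^*_{T_n}$ setting $b_{n+1}=0$, compatibility forces $c_\mu^{(n)} = c_\mu^{(n+1)}|_{b_{n+1}=0}$ for all $\mu \in \SP^k(n)$. Thus for each fixed $\mu \in \SP^k$, the elements $c_\mu^{(n)}$ (defined for all $n$ large enough that $\mu \in \SP^k(n)$) form a compatible system under the maps $b_{n+1} = 0$, hence define a unique element $c_\mu \in \varprojlim_n \CK^*_{T_n} = \QQ[\beta][[b]]_{\gr}$ of degree $m - |\mu|$. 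Conversely, any choice of $c_\mu \in \QQ[\beta][[b]]_{\gr}$, one for each $\mu \in \SP^k$, homogeneous of the right degrees, reassembles into a well-defined compatible sequence: in degree $m$ only finitely many $\mu$ have $|\mu| \leq m$ contributing nonzero pieces in any fixed $\CK^m_{T_n}(\IG^k(n))$ after truncation, so the sum $\sum_\mu c_\mu [\Omega_\mu^X]_T$ makes sense as a possibly infinite sum converging in the graded projective limit. This gives both existence and uniqueness of the expansion in the statement.

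I do not anticipate a serious obstacle here; the argument is essentially bookkeeping with the projective limit. The one point requiring a little care is the precise sense in which the possibly infinite sum $\sum_{\lambda \in \SP^k} c_\lambda [\Omega_\lambda^X]_T$ is interpreted: one must note that in each fixed finite level $\CK^m_{T_n}(\IG^k(n))$ only the terms with $\lambda \in \SP^k(n)$ survive, and among those, after expanding $c_\lambda$ modulo high powers of $\beta$ and the $b_i$, the contributions stabilize, so the sum is well-defined in the graded projective limit even though it is infinite. This is precisely the same type of convergence already used implicitly in the definition of $\bbK^{\fin}_T(\IG^k)$ in the introduction, so I would simply invoke that framework. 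Hence the lemma follows.
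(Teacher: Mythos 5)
Your proposal is correct and follows essentially the same route as the paper, which simply asserts the lemma as a consequence of the fact that the classes $[\Omega_\mu^X]_{T_n}$, $\mu\in\SP^k(n)$, form a free $\CK^*_{T_n}$-basis together with the compatibility \eqref{stabSch}; you have just made the projective-limit bookkeeping explicit. One small slip in wording: the finiteness of the sum at each level $n$ comes from $\SP^k(n)$ being a finite set (as you correctly say at the end), not from there being finitely many $\mu$ with $|\mu|\leq m$ — since $\deg\beta=-1$, the coefficient ring $\QQ[\beta][[b]]_{\gr}$ is nonzero in every degree and infinitely many $\lambda$ may carry nonzero coefficients.
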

\begin{defn}\label{dfFiniteCKT}
Let  $\bbK^{\fin}_T(\IG^k)$ be the $\calR_b$-submodule of $\bbK^{\infty}_T(\IG^k)$ consisting of the following possibly infinite $\calR_b$-linear combinations of the classes $[\Omega_{\lambda}^X]_T$
\[
f = \sum_{\lambda\in \SP^k} c_{\lambda} [\Omega_{\lambda}^X]_T, \ \ \ \ \ c_{\lambda}\in \calR_b.
\]
\end{defn}
It will be shown below that there is an isomorphism of $\calR_b$-algebras $\calK_{\infty}^{(k)} \cong \bbK^{\fin}_T(\IG^k)$.
\subsection{Pfaffian formula of the equivariant Schubert classes}\label{4-4}
Let $U$ be the tautological isotropic bundle of $\IG^k(n)$ of rank $n-k$. We abuse notation and denote by $F^i$ the vector bundle over $\IG^k(n)$ with the fiber $F^i$.  We have  
\begin{eqnarray*}
c(E/F^{\ell};u)&=& \prod_{i=1}^n(1+ \bar b_i u)\cdot \prod_{i=1}^{\ell}(1 + b_iu) \ \ \  \ \ \ (0\leq \ell \leq n),\\
c(E/F^{-\ell};u)&=& \prod_{i=\ell+1}^n(1+ \bar b_i u) \ \ \ \ \ \ \ \ (1\leq \ell \leq n), 
\end{eqnarray*}
where $E$ denotes the vector bundle with fiber $E^{(n)}$. Let us point out that for the odd orthogonal case, the action of $T_n$ on $\Span\{\vece_0\}$ is trivial so that $c((F^0)^{\perp}/F^0;u)=1$.
\begin{defn}
We define the classes $\scX_m^{(\ell)} \in \CK^*_{T_n}(\IG^k(n))$ for $m\in \ZZ$ and $\ell=-n,\cdots,n$ as follows. For the symplectic case $\IG^k(n)=\SG^k(n)$, we let 
\[
\scX_m^{(\ell)}:=\scC_m^{(\ell)} = \scS_{m}(U^{\vee}-(E/F^{\ell})^{\vee})
\]
and for the odd orthogonal case $\IG^k(n)=\OG^k(n)$, we let
\[
\scX_m^{(\ell)}:=\scB_m^{(\ell)} = \begin{cases}
\scS_m((U- E/F^{\ell})^{\vee}) & (-n\leq \ell <0),\\
\displaystyle\frac{1}{2} \sum_{s\geq 0} \left(\displaystyle\frac{-\beta}{2}\right)^s\scS_{m+s}((U- E/F^{\ell})^{\vee}) & (0\leq \ell \leq n).
\end{cases}
\]
\end{defn}

Let us point out that by Definition \ref{dfrelsegre} and Remark \ref{remgrlw}, we have
\begin{equation}\label{Cclassgen}
\sum_{m\in \ZZ}\scC_m^{(\ell)} u^m= \frac{1}{1+\beta u^{-1}} \frac{c(E/F^{\ell}; u+\beta)}{c(U;u+\beta)}.
\end{equation}

\begin{thm}[\cite{HIMN}]\label{thmPf}
Let $\lambda$ be a $k$-strict partition in $\SP^k(n)$ of length $r$ and $\chi$ its characteristic index. In $\CK^*_{T_n}(\IG^k(n))$, the Schubert class $[\Omega_{\lambda}^X]_{T_n}$ is given by 
\[
[\Omega_{\lambda}^X]_{T_n}= \sum_{I \subset D(\lambda)} \Pf\left(\sum_{p,q\in \ZZ \atop{p\geq 0, p+q\geq 0}} \ff_{pq}^{ij,I} \scX_{\lambda_i+d_i^I+p}^{(\chi_i)}\scX_{\lambda_j+d_j^I+q}^{(\chi_j)}\right)_{1\leq i<j\leq m},
\]
where  $m=r$ if $r$ is even and $m=r+1$ if $r$ is odd, and $\scX_{-i}^{(-n-1)}:=(-\beta)^i$ for $i\geq 0$. 
\end{thm}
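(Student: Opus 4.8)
The plan is to realize $[\Omega_\lambda^X]_{T_n}$ as the class of an isotropic degeneracy locus, compute it by a Gysin pushforward along a Kempf--Laksov-type resolution, and reorganize the output into the stated Pfaffian. \textbf{Step 1 (special classes).} For a one-part $k$-strict partition $\lambda=(m)$, with $\chi_1=m-k-1$, the Schubert variety is (a mild modification of) the locus where the bundle map $U\to E/F^{\chi_1}$ drops rank by one; a standard projective-bundle/Gysin computation in connective $K$-theory, using the generating-function definition of the relative Segre classes together with the multiplicative formal group law $u\oplus v=u+v+\beta uv$, would show that $[\Omega_{(m)}^X]_{T_n}=\scX_m^{(m-k-1)}$. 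In type B one additionally accounts for the trivial summand $\Span\{\vece_0\}$, which is precisely what produces the correction $\tfrac12\sum_{s\geq 0}(-\beta/2)^s(\cdots)$ in the definition of $\scB_m^{(\ell)}$. This is exactly the $r=1$ case.

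\textbf{Step 2 (reduction to special classes).} For general $\lambda$ of length $r$ I would introduce a Kempf--Laksov (Bott--Samelson-type) resolution $\pi\colon\widetilde\Omega_\lambda\to\Omega_\lambda^X$, realizing $\widetilde\Omega_\lambda$ as an iterated tower of isotropic and ordinary Grassmannian bundles governed by the incidence conditions $\dim(U\cap F^{\chi_i})\geq i$, with $\pi$ birational so that $\pi_*1=[\Omega_\lambda^X]$. Pushing $1$ forward through the tower, stage by stage, using the connective-$K$-theory Gysin formula for a Grassmannian bundle (with its formal-group corrections) and the Segre-class identification of Step 1, expresses the answer as a sum of products of the classes $\scX_\ast^{(\chi_i)}$. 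The combinatorial content of the pushforward along the isotropic tower is a Pfaffian, namely a connective-$K$-theory analogue of the Pragacz--Ratajski Pfaffian formula for isotropic degeneracy loci. The rational functions $F_{i,j}^I(t)$, the shifts $d_i^I$, and the subsets $I\subset D(\lambda)$ of Definition \ref{dfPfSymb} encode exactly the lower-order terms the multiplicative formal group law contributes beyond the leading Chow-theoretic Pfaffian: on a pair $(i,j)\in C(\lambda)$ (where $\chi_i+\chi_j\geq 0$) the interaction of $\scX^{(\chi_i)}$ and $\scX^{(\chi_j)}$ gives an honest Pfaffian entry, whereas on a pair in $D(\lambda)$ one must expand the formal-group factor $\frac{1-\bar t_i/\bar t_j}{1-t_i/\bar t_j}$ as the Laurent series \eqref{defLaurent} and sum over which such pairs are activated, i.e. over $I$.

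\textbf{Step 3 (the Pfaffian identity) and the main obstacle.} It then remains to prove the identity among connective-$K$-theory classes that the two computations must share. One route is direct: carry out the iterated pushforward of Step 2 and prove, by induction on $r$, a \emph{$K$-theoretic Pfaffian summation lemma} converting the iterated Gysin integral into $\sum_{I\subset D(\lambda)}\Pf(\cdots)$; the cohomological prototype, already nontrivial (Pragacz), must be upgraded to track every lower-order term, and one must show these assemble precisely into the prescribed sum over $I$. A second route is equivariant localization: both sides lie in $\CK^*_{T_n}(\IG^k(n))\hookrightarrow\bigoplus_\mu\CK^*_{T_n}$, the restriction $[\Omega_\lambda^X]_{T_n}|_{e_\mu}$ is pinned down by the usual vanishing and normalization conditions, and the restriction of the right-hand side is an explicit rational function obtained by substituting the fixed-point Chern roots of $U$ and $E/F^{\chi_i}$ into the generating functions, so that matching them becomes a finite rational-function identity for each $\mu$ (this is in the same spirit as the GKM comparison carried out later in the paper). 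I expect the direct route — the $K$-theoretic isotropic Pfaffian identity carrying all formal-group corrections — to be the principal difficulty: it is exactly the bookkeeping of those corrections that forces the intricate combinatorics of $C(\lambda)$, $D(\lambda)$, $\gamma_j$, and the characteristic index $\chi$.
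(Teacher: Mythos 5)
The paper does not actually reprove this theorem: its proof is a two-line reduction to the non-equivariant degeneracy-locus formulas of \cite{HIMN} (Theorems 5.20 and 6.16 there), applied to finite-dimensional approximations of the Borel construction $ET_n\times_{T_n}\IG^k(n)$, with functoriality of Chern classes carrying the formula to the limit. Your proposal instead attempts a proof from scratch, and in outline it recapitulates the strategy of the cited source (resolution of the locus, iterated Gysin pushforward in connective $K$-theory, reorganization into a Pfaffian), so the plan is not misdirected; but as written it is not a proof.

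The genuine gap is your Step 3. The ``$K$-theoretic Pfaffian summation lemma'' converting the iterated pushforward into $\sum_{I\subset D(\lambda)}\Pf(\cdots)$, with the precise bookkeeping of $C(\lambda)$, $D(\lambda)$, the shifts $d_i^I$ and the Laurent coefficients $\ff_{pq}^{ij,I}$, \emph{is} the content of the theorem; you explicitly defer it as ``the principal difficulty,'' and nothing in Steps 1--2 constrains the answer enough to force this particular sum over $I$ rather than some other rearrangement of the lower-order terms. The localization alternative you mention has the same problem in another guise: matching restrictions at every fixed point $e_\mu$ for all $\lambda$ is not a finite check and needs its own inductive mechanism (vanishing/normalization plus divided differences), none of which is supplied. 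Two further omissions: the statement lives in $\CK^*_{T_n}(\IG^k(n))$, which is defined through limits of approximations, so even granting the non-equivariant formula one must explain why it persists equivariantly --- this passage is exactly the (short but necessary) substance of the paper's proof and is absent from yours; and the boundary convention $\scX_{-i}^{(-n-1)}=(-\beta)^i$, needed when $r$ is odd, together with the type B factor $\tfrac12\sum_{s\ge0}(-\beta/2)^s(\cdots)$, is asserted rather than derived from your pushforward.
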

This theorem follows from \cite[Theorem 5.20, 6.16]{HIMN}. Indeed, let $BT_n$ be the classifying space of $T_n$ and $ET_n\rightarrow BT_n$ the universal bundle. Consider the bundle $ET_n\times_{T_n}E$ over $ET_n\times_{T_n}\IG^k(n)$. We can apply Theorem 5.20 or 6.16 in \cite{HIMN} to every finite approximation of this bundle. Then the functoriality of Chern classes implies the claim.

\section{GKM descriptions in algebra and geometry}\label{sec5}
We use techniques of the equivariant localization maps to study both rings $\calK_{\infty}^{(k)}$ and $\bbK^{\infty}_T(\IG^k)$. By using this we have so-called GKM (after Goresky--MacPherson--Kottwitz \cite{GKM}) description for these rings. These maps also enable us to establish the connection between $\GC/\GB$-functions and the stable limits of the torus equivariant Schubert classes.
\subsection{GKM description for $\calK_{\infty}^{(k)}$}\label{sec5-1}
First we study the GKM description of the ring $\calK_{\infty}$.  Let $\Fun(W_{\infty}, \calR_b)$ be the algebra of maps from $W_{\infty}$ to $\calR_b$ whose algebra structure is naturally given by the one of $\calR_b$. An element $\psi$ in $\Fun(W_{\infty}, \calR_b)$ is denoted by  $(v \mapsto \psi(v))_{v \in W_{\infty}}$. We define an action of $W_{\infty}$ on $\Fun(W_{\infty}, \calR_b)$ by
\[
w(\psi)(v):= \psi(vw), \ \ \psi\in \Fun(W_{\infty}, \calR_b), w, v\in W_{\infty}.
\]
\begin{defn}\label{dflocPhi_v}
We introduce the following homomorphism of $\calR_b$-algebras
\[
\Phi_{\infty} : \calK_{\infty} \to \Fun(W_{\infty}, \calR_b); \ \ \ \ \Phi_{\infty}(f) := (v \mapsto \Phi_v(f))_{v \in W_{\infty}},
\]
where $\Phi_v: \calK_{\infty} \to \calR_b$ is the $\calR_b$-algebra homomorphism given by the substitution
\[
x_i \mapsto \begin{cases}  
b_{v(i)} & \mbox{ if $v(i)<0$}\\
0 & \mbox{ if $v(i)>0$}
\end{cases}
 \ \ \ \ \mbox{ and } \ \  a_i \mapsto b_{-v(i)}.
\]
where we set $b_{m}:=\bar b_{-m}$ if $m<0$.
\end{defn}
\begin{defn}\label{dferoot}
Recall that $\Delta^+$ denotes the root system of type B and C in the lattice $L=\bigoplus_{i=1}^\infty \ZZ \varepsilon_i$ (see \S\ref{sec2-1}). We define a map $e: L \to \calR_b$ by
\[
e(\varepsilon_i)=b_i,\ \  e(-\varepsilon_i)=\bar b_i, \ \  e(\alpha+\gamma)=e(\alpha)\oplus e(\gamma) \ \mbox{ and }  \ e(\alpha-\gamma)=e(\alpha)\ominus e(\gamma) \ \ \ \ (\alpha,\gamma\in L).
\]
For each $\alpha \in \Delta^+_n$ one has $e(\alpha)\in \CK^*_{T_n}\cong \QQ[\beta][[b_1,\dots,b_n]]_{\gr}$.  
\end{defn}
\begin{defn} 
Let $\frakK_{\infty}$ be the subalgebra of $\Fun(W_{\infty}, \calR_b)$ consisting of maps $\psi$ such that
\[
\psi(s_{\alpha} v) - \psi(v) \in e(\alpha)\cdot \calR_b, \ \ \mbox{ for all } v\in W_{\infty}\ \ \mbox{ and } \alpha \in \Delta^+.
\]
\end{defn}
\begin{rem}
By the fact that $e(2\varepsilon_i) = b_i \oplus b_i = b_i(2+ \beta b_i)$ and since $2+\beta b_i$ is invertible in $\calR_b$, we can see that $\frakK_n^{(k)}$ is independent of the type B and C. 
\end{rem}
\begin{lem}\label{lemfullinj}
The map $\Phi_{\infty}$ is injective and its image lies in $\frakK_{\infty} \subset \Fun(W_{\infty}, \calR_b)$.
\end{lem}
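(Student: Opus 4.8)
The statement has two parts: injectivity of $\Phi_\infty$, and the containment $\Phi_\infty(\calK_\infty)\subseteq \frakK_\infty$. I would treat them separately, starting with the containment since it is the more computational of the two and sets up notation.

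\emph{Containment.} The plan is to check the defining congruence $\Phi_v(f)\equiv \Phi_{s_\alpha v}(f) \pmod{e(\alpha)\calR_b}$ directly on algebra generators and then invoke multiplicativity. Since $\Phi_\infty$ is an $\calR_b$-algebra homomorphism and the set of $\psi$ satisfying a fixed congruence modulo a principal ideal is a subring, it suffices to verify the congruence when $f$ is one of the generators of $\calK_\infty=\GGamma\otimes_{\QQ[\beta]}\calR_a\otimes_{\QQ[\beta]}\calR_b$ over $\calR_b$, i.e. for $f\in\GGamma$ and $f=a_i$. By Proposition \ref{propfb} every homogeneous element of $\GGamma$ is an $\calR_a$-less, possibly infinite $\QQ[\beta]$-combination of the $\GP_\lambda(x)$, so (after checking the congruence is preserved under the relevant limits/convergence in the graded topology) one is reduced to $f=\GP_\lambda(x)$ and $f=a_i$. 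For $f=a_i$ the substitution gives $\Phi_v(a_i)=b_{-v(i)}$ and one computes $\Phi_{s_\alpha v}(a_i)-\Phi_v(a_i)$ explicitly for each positive root $\alpha=\varepsilon_j$, $\varepsilon_j\pm\varepsilon_l$ (using $b_{-m}:=\bar b_m$), finding it is either $0$ or an explicit multiple of $e(\alpha)$ via the formal-group identities $u\oplus v$, $u\ominus v$, $\bar u$. For $f=\GP_\lambda(x)$ the key point is that $\GP_\lambda$ is symmetric and satisfies the cancellation property (2)', so the substitution $x_i\mapsto b_{v(i)}$ or $0$ is insensitive to many changes in $v$; one again checks root by root that passing from $v$ to $s_\alpha v$ either leaves $\Phi_v(\GP_\lambda)$ unchanged or changes it by an element divisible by $e(\alpha)$, the divisibility coming from the factored form of $\GP_\lambda$ and the identity $c_1(L\otimes M)=c_1(L)\oplus c_1(M)$ translating into $e$.

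\emph{Injectivity.} The plan is to use the formal basis. Suppose $f\in\calK_\infty$ is homogeneous with $\Phi_\infty(f)=0$. Write $f$ via the $\GP$-basis of $\GGamma$ (Proposition \ref{propfb}) as $f=\sum_{\lambda\in\SP^0} \GP_\lambda(x)\otimes g_\lambda$ with $g_\lambda\in\calR_a\otimes_{\QQ[\beta]}\calR_b$; only finitely many $g_\lambda$ of each degree are nonzero after one fixes the degree and truncates, but in general there are infinitely many $\lambda$. The idea is to evaluate $\Phi_v$ at suitable $v$ to isolate the $g_\lambda$'s one at a time. Concretely, for a strict partition $\lambda$ choose a Grassmannian signed permutation $v_\lambda\in W_\infty^{(0)}$ whose first entries realize $\lambda$ (via the bijection $W_\infty^{(0)}\cong\SP^0$ of \S\ref{sec2-2}); then the triangularity of $\Phi_{v_\mu}(\GP_\lambda(x))$ with respect to the dominance-type order on strict partitions (the localization of a $\GP$-function at a fixed point vanishes unless the indexing partition is $\leq$ the one at the fixed point, with a unit on the diagonal — the $K$-theoretic analogue of the classical fact) lets one solve for the $g_\lambda$ recursively, concluding $g_\lambda=0$ for all $\lambda$. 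I would package this triangularity as the statement that the matrix $\bigl(\Phi_{v_\mu}(\GP_\lambda)\bigr)_{\lambda,\mu}$, suitably interpreted in the graded/completed setting, is ``uni-triangular'' hence injective on each graded piece.

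\emph{Main obstacle.} The genuinely delicate point is the interplay between the \emph{infinite} linear combinations allowed by Proposition \ref{propfb} and the evaluation maps $\Phi_v$: I must argue that $\Phi_v$ is continuous for the graded topology so that $\Phi_v(\sum_\lambda c_\lambda\GP_\lambda)=\sum_\lambda c_\lambda\Phi_v(\GP_\lambda)$ makes sense and converges in $\calR_b$, and that the triangular system one gets can actually be solved despite being infinite. This is where I would spend the most care; everything else is either a formal ``check on generators'' argument or a root-by-root computation with the formal group law operations $\oplus$, $\ominus$, $\bar{(\cdot)}$. A secondary, purely bookkeeping obstacle is handling the type B versus type C distinction in $e(2\varepsilon_i)$ versus $e(\varepsilon_i)$, but the remark preceding the lemma already observes that $2+\beta b_i$ is a unit, so the two cases collapse.
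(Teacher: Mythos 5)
Your treatment of the containment is consistent with the paper, which simply defers that part to the proof of Lemma 7.1 of Ikeda--Naruse; a root-by-root check on generators, together with the continuity remark needed to pass to infinite $\GP$-expansions, is the right shape of argument there.

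The injectivity argument, however, has a genuine gap. You write $f=\sum_{\lambda}\GP_\lambda(x)\otimes g_\lambda$ with $g_\lambda\in\calR_a\otimes_{\QQ[\beta]}\calR_b$ and propose to recover the $g_\lambda$ by a triangular recursion over the evaluations $\Phi_{v_\mu}$ at the $0$-Grassmannian elements $v_\mu$. But $\Phi_{v_\mu}$ does not treat $g_\lambda$ as a scalar: it substitutes $a_i\mapsto b_{-v_\mu(i)}$, and for a Grassmannian $v_\mu$ attached to $\mu=(\zeta_1>\cdots>\zeta_s)$ these values ($b_{\zeta_1},\dots,b_{\zeta_s},\bar b_{u_1},\dots$) are precisely the $b$-variables that also occur in the $x$-substitution and, in general, in $g_\lambda$ itself. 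So your recursion yields only that one specific evaluation of $g_\lambda$ per $\mu$ vanishes in $\calR_b$, which does not force $g_\lambda=0$ in $\calR_a\otimes_{\QQ[\beta]}\calR_b$. Already the first step fails: for $\mu=\emptyset$ one has $v_\emptyset=e$ and the conclusion is $g_\emptyset(\bar b_1,\bar b_2,\dots;b)=0$, which is satisfied by the nonzero element $g_\emptyset=a_1\oplus b_1$. (A smaller inaccuracy: the diagonal entries $\Phi_{v_\lambda}(\GP_\lambda)$ are products of the $e(\alpha)$ over the inversions of $v_\lambda$, hence non-zero-divisors but not units, contrary to your ``unit on the diagonal''.) The paper avoids all of this by a separation-of-variables trick rather than triangularity: after noting that all the coefficients involve only $a_1,\dots,a_n$ and $b_1,\dots,b_m$ for some fixed $m,n$, it evaluates at the single non-Grassmannian element $v=(m+1,\dots,m+n,1,\dots,m,\overline{m+n+1},\dots,\overline{m+n+N})$, which sends the $a$'s to the fresh variables $\bar b_{m+1},\dots,\bar b_{m+n}$ and the $x$'s to the further fresh variables $\bar b_{m+n+1},\dots,\bar b_{m+n+N}$; the formal-basis property of the $\GP_\lambda(x_1,\dots,x_N)$ over $\QQ[\beta][[b_1,\dots,b_{m+n}]]_{\gr}$ then kills all coefficients at once, and letting $N\to\infty$ finishes. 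Your approach could be repaired by likewise choosing evaluation points whose $a$- and $x$-images are fresh, disjoint sets of variables, but as written the recursion does not close.
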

\begin{proof}
The proof of the latter claim is similar to the one in the proof of Lemma 7.1. in \cite{IkedaNaruse}. We leave the details to the reader. Below we prove the injectivity.

By the definition of $\calK_{\infty}$ and Proposition \ref{propfb}, a homogeneous element $f$ of $\calK_{\infty}$ of degree $d$ can be uniquely written as
\begin{equation}\label{eqf1}
f=\sum_{\lambda\in \SP^0} c_{\lambda}(a;b) \GP_{\lambda}(x), \ \ \ c_{\lambda}(a;b)\in(\calR_a \otimes_{\QQ[\beta]} \calR_b)_{d - |\lambda|}.
\end{equation}
By the definition of $\calR_a \otimes_{\QQ[\beta]} \calR_b$, there exist $m,n$ such that for {\it all} $\lambda \in \SP^0$
\[
c_{\lambda}(a;b)=c_{\lambda}(a_1,\dots,a_n;b_1,\dots, b_m) \in \QQ[\beta][[a_1,\dots,a_n, b_1,\dots,b_m]]_{d-|\lambda|}.
\]
Now suppose that $\Phi_{\infty}(f)=0$. We choose an integer $N > m+n$ and consider an element $v$ of $W_{\infty}$ given by
\[
v=(m+1,\dots,m+n,1,\dots,m, \overline{m+n+1},\dots,\overline{m+n+N}). 
\]
Applying $\Phi_v$ to (\ref{eqf1}) we obtain
\[
\Phi_v(f)=\sum_{\lambda\in \SP^0} c_{\lambda}(\bar b_{m+1},\dots,\bar b_{m+n};b_1,\dots, b_m) \GP_{\lambda}(\bar b_{m+n+1},\dots, \bar b_{m+n+N},0,0,\dots)=0.
\]
Finally we can conclude that $c_{\lambda}(\bar b_{m+1},\dots,\bar b_{m+n};b_1,\dots, b_n)=0$ for all $\lambda \in \SP^0$ by the facts that $\GP_{\lambda}(x_1,\dots, x_N), \lambda\in \SP^0_N$ form a formal basis of $\GGamma_N$  (see the proof of Proposition \ref{propfb}) and that $N$ can be chosen arbitrary as long as it is greater than $m+n$. This completes the proof.
\end{proof}
Next we will obtain the GKM description of $\calK_{\infty}^{(k)}$. 
\begin{defn}
Let $\frakK^{(k)}_{\infty}$ be the subalgebra of $\Fun(\SP^{k}, \calR_b)$ consisting of functions $\psi$ such that
\[
\psi(s_{\alpha} \mu) - \psi(\mu) \in e(\alpha)\cdot \calR_b, \ \ \mbox{ for all } \mu\in \SP^{k}\ \ \mbox{ and } \alpha \in \Delta^+.
\]
\end{defn}
The following proposition is essentially a consequence of Lemma \ref{lemfullinj}.
\begin{prop}\label{universal localization}
The map $\Phi_{\infty}$ naturally induces an injective $\calR_b$-algebra homomorphism 
\[
\Phi_{\infty}^{(k)}: \calK_{\infty}^{(k)} \to \Fun(\SP^k, \calR_b)
\]
and its image lies in $\frakK_{\infty}^{(k)}$.
\end{prop}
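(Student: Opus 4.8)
The plan is to descend the already-established statement of Lemma~\ref{lemfullinj} along the quotient $W_\infty\twoheadrightarrow W_\infty/W_{(k)}\cong\SP^k$. First I would recall that $\calK_\infty^{(k)}=\calK_\infty^{W_{(k)}}$ and observe that, under the right action of $W_\infty$ on $\Fun(W_\infty,\calR_b)$ given by $w(\psi)(v)=\psi(vw)$, the map $\Phi_\infty$ is $W_\infty$-equivariant: for $f\in\calK_\infty$ and $w,v\in W_\infty$ one checks directly from Definition~\ref{dflocPhi_v} that $\Phi_v(wf)=\Phi_{vw}(f)$, since precomposing the substitution $x_i,a_i$ with the right $W_\infty$-action on $x,a$ amounts to replacing $v$ by $vw$ in the one-line notation. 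Hence if $f$ is $W_{(k)}$-invariant, then $\Phi_\infty(f)$ is constant on the left cosets $vW_{(k)}$, i.e. it factors through the projection $W_\infty\to W_\infty/W_{(k)}$. Using the bijection $W_\infty^{(k)}\cong\SP^k$ from \S\ref{sec2-2} (sending the minimal-length coset representative to a $k$-strict partition) we may therefore regard $\Phi_\infty(f)$ as a well-defined element of $\Fun(\SP^k,\calR_b)$; call the resulting map $\Phi_\infty^{(k)}$. By construction $\Phi_\infty^{(k)}$ is an $\calR_b$-algebra homomorphism, and its restriction is compatible with $\Phi_\infty$ in the obvious sense.

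Next I would verify injectivity. If $f\in\calK_\infty^{(k)}$ and $\Phi_\infty^{(k)}(f)=0$, then $\Phi_v(f)=0$ for every $v\in W_\infty^{(k)}$, hence (by $W_{(k)}$-invariance of $f$ and the equivariance just established) for every $v\in W_\infty$; thus $\Phi_\infty(f)=0$, and injectivity of $\Phi_\infty$ from Lemma~\ref{lemfullinj} forces $f=0$. This step is essentially formal once equivariance is in hand.

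Finally I would check that the image lies in $\frakK_\infty^{(k)}$. Given $f\in\calK_\infty^{(k)}$, write $\psi=\Phi_\infty^{(k)}(f)$, fix $\mu\in\SP^k$ and $\alpha\in\Delta^+$, and let $v\in W_\infty^{(k)}$ be the representative of $\mu$. The subtlety is that $s_\alpha v$ need not itself be the minimal-length representative of its coset, so $s_\alpha\mu$ (the $W_\infty$-action on $\SP^k$ defined via $W_\infty^{(k)}\cong\SP^k$) is represented by $v'=s_\alpha v\, w$ for some $w\in W_{(k)}$. Then, using $W_{(k)}$-invariance of $f$ and the equivariance, $\psi(s_\alpha\mu)=\Phi_{v'}(f)=\Phi_{s_\alpha v w}(f)=\Phi_{s_\alpha v}(wf)=\Phi_{s_\alpha v}(f)$, and likewise $\psi(\mu)=\Phi_v(f)$; so $\psi(s_\alpha\mu)-\psi(\mu)=\Phi_{s_\alpha v}(f)-\Phi_v(f)$, which lies in $e(\alpha)\cdot\calR_b$ by Lemma~\ref{lemfullinj} (the GKM condition for $\frakK_\infty$). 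Hence $\psi\in\frakK_\infty^{(k)}$.

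The only genuinely delicate point is the bookkeeping in the last paragraph: one must be careful that the $W_\infty$-action on $\SP^k$ (defined through the coset-representative bijection) interacts correctly with the raw right action on $\calK_\infty$, and in particular that the extra element $w\in W_{(k)}$ produced when $s_\alpha v$ leaves $W_\infty^{(k)}$ is absorbed harmlessly by $W_{(k)}$-invariance of $f$. Everything else — equivariance of $\Phi_\infty$, the factorization through $\SP^k$, and injectivity — is a direct unwinding of Definition~\ref{dflocPhi_v} together with Lemma~\ref{lemfullinj}, so I would present those parts briefly and concentrate the argument on the GKM-condition verification.
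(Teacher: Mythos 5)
Your proposal is correct and follows essentially the same route as the paper: both use the $W_{\infty}$-equivariance of $\Phi_{\infty}$ to restrict to $W_{(k)}$-invariants, identify $\Fun(W_{\infty},\calR_b)^{W_{(k)}}$ with $\Fun(\SP^k,\calR_b)$ via minimal coset representatives, and deduce the GKM condition from Lemma~\ref{lemfullinj}. Your explicit bookkeeping with $w_{s_\alpha\mu}=s_\alpha v\,w$, $w\in W_{(k)}$, just spells out the step the paper summarizes as ``the $W_{(k)}$-invariant part of $\frakK_{\infty}$ coincides with $\frakK_{\infty}^{(k)}$.''
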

\begin{proof}
Since the map $\Phi_{\infty}$ is $W_{\infty}$-equivariant (cf. \cite[Proposition 7.3]{IkedaMihalceaNaruse}), we find that $\Phi_{\infty}$ restricts to the injective map $\calK_{\infty}^{(k)} \to \Fun(W_{\infty}, \calR_b)^{W_{(k)}}$ by taking the $W_{\infty}^{(k)}$-invariant parts. Now we can identify $\Fun(\SP^k, \calR_b)$ with $\Fun(W_{\infty}, \calR_b)^{W_{(k)}}$ as $\calR_b$-algebras as follows. For each $\lambda\in \SP^k$, let $w_{\lambda}$ be the corresponding $k$-Grassmannian element in $W_{\infty}^{(k)}$. For each $\psi \in \Fun(\SP^k, \calR_b)$, the corresponding element of $\Fun(W_{\infty}, \calR_b)^{W_{(k)}}$ is a map sending each $v \in W_{\infty}$  to $\psi(\lambda)$ if $v\in w_{\lambda}W_{(k)}$. Thus we obtain an injective $\calR_b$-algebra homomorphism $\Phi_{\infty}^{(k)}: \calK_{\infty}^{(k)} \to \Fun(\SP^k, \calR_b)$. Furthermore, we can observe that the $W_{\infty}$-action on $\Fun(W_{\infty}, \calR_b)$ preserves $\frakK_{\infty}$ and notice that, under the identification $ \Fun(W_{\infty}, \calR_b)^{W_{(k)}}\cong \Fun(\SP^k, \calR_b)$,  the $W_{(k)}$-invariant part of $\frakK_{\infty}$ coincides with $\frakK_{\infty}^{(k)}$. This completes the proof.
\end{proof}
\subsection{Map $\Psi_{\infty}^{(k)}$ through GKM descriptions}\label{sec5-2}
First we recall the GKM description of $\CK^*_{T_n}(\IG^k(n))$ following \cite[Corollary (3.20)]{KostantKumar} (cf. \cite[Theorem 7.8 ]{Krishna}). 

The set $(\IG^k(n))^{T_n}$ of $T_n$-fixed  points in $\IG^k(n)$ is bijective to $\SP^k(n)$. For each $\lambda \in \SP^k(n)$, let $e_\lambda$ denote the corresponding fixed point. Let $\Fun(\SP^{k}(n),\CK^*_{T_n})$ be the algebra of maps from $\SP^{k}(n)$ to $\CK^*_{T_n}$ whose algebra structure is naturally given by the one of $\CK^*_{T_n}$. We can identify $\CK^*_{T_n}((\IG^k(n))^{T_n})$ with $\Fun(\SP^{k}(n),\CK^*_{T_n})$ as graded $\CK^*_{T_n}$-algebras. The inclusion  $\iota_n: (\IG^k(n))^{T_n} \inc \IG^k(n)$ defines, by pull-back, an injective homomorphism of $\CK^*_{T_n}$-algebras
\[
\iota_n^*: \CK^*_{T_n}(\IG^k(n))\to \Fun(\SP^{k}(n),\CK^*_{T_n}).
\]
Let $\frakK_n^{(k)}$ be the graded $\CK^*_{T_n}$-subalgebra of $\Fun(\SP^k(n), \CK^*_{T_n})$ defined as follows. A map $\psi: \SP^k(n) \to \CK^*_{T_n}$  is in $\frakK_n^{(k)}$ if and only if 
\[
\psi(s_{\alpha}\mu)-\psi(\mu) \in e(\alpha)\cdot \CK^*_{T_n} \ \ \mbox{ for all } \mu\in \SP^k(n) \ \mbox{ and }\  \alpha \in \Delta^+_n
\]
where $e(\alpha)$ was introduced at Definition \ref{dferoot}. Then the image of the map $\iota_n^*$ coincides with $\frakK_n^{(k)}$. In other words, we have the isomorphism of $\CK^*_{T_n}$-algebras
\begin{equation}\label{GKMgeom}
\iota_n^*:\CK^*_{T_n}(\IG^k(n)) \cong \frakK_n^{(k)}. 
\end{equation}

Now we construct an injective homomorphism from $\calK_{\infty}^{(k)}$ to $\bbK^{\infty}_T(\IG^k)$ using the injective homomorphism $\Phi^{(k)}_{\infty}$ given in Proposition \ref{universal localization} together with the isomorphism $\iota_n^*$ at (\ref{GKMgeom}).
\begin{prop}\label{propvarpi}
There is an injective homomorphism of graded $\calR_b$-algebras
\[
\Psi_{\infty}^{(k)}:\calK_{\infty}^{(k)}   \to\bbK^{\infty}_T(\IG^k).
\]
\end{prop}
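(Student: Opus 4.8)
The plan is to build $\Psi_{\infty}^{(k)}$ by matching the two GKM descriptions levelwise in $n$ and then passing to the projective limit. First I would fix an integer $n\geq k$ and restrict attention to $k$-strict partitions in $\SP^k(n)$. Recall that $\SP^k(n)\subset\SP^k$, so there is a natural restriction map $\Fun(\SP^k,\calR_b)\to\Fun(\SP^k(n),\CK^*_{T_n})$ obtained by restricting the domain and then applying the projection $\calR_b\to\CK^*_{T_n}=\QQ[\beta][[b_1,\dots,b_n]]_{\gr}$ that sets $b_i=0$ for $i>n$. The key compatibility to check is that this restriction carries $\frakK_{\infty}^{(k)}$ into $\frakK_n^{(k)}$: the defining congruences of $\frakK_\infty^{(k)}$ involve all $\alpha\in\Delta^+$, and after restricting to $\SP^k(n)$ and specializing $b_i=0$ for $i>n$, the congruences for $\alpha\in\Delta^+_n$ survive verbatim (since $e(\alpha)$ for $\alpha\in\Delta^+_n$ lies in $\CK^*_{T_n}$ by Definition \ref{dferoot}), while the congruences for $\alpha\notin\Delta^+_n$ become vacuous because both sides collapse appropriately. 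Combining the injection $\Phi_\infty^{(k)}\colon\calK_\infty^{(k)}\to\frakK_\infty^{(k)}$ of Proposition \ref{universal localization} with this restriction and the inverse of the isomorphism $\iota_n^*\colon\CK^*_{T_n}(\IG^k(n))\iso\frakK_n^{(k)}$ of (\ref{GKMgeom}), I get for each $n$ an $\calR_b$-algebra homomorphism
\[
\Psi_n^{(k)}\colon\calK_\infty^{(k)}\longrightarrow\CK^*_{T_n}(\IG^k(n)).
\]

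Next I would verify that the family $\{\Psi_n^{(k)}\}_n$ is compatible with the pullbacks $j_n^*\colon\CK^*_{T_{n+1}}(\IG^k(n+1))\to\CK^*_{T_n}(\IG^k(n))$, i.e.\ that $j_n^*\circ\Psi_{n+1}^{(k)}=\Psi_n^{(k)}$. This is again a GKM-level computation: under the identifications $\iota_n^*$ and $\iota_{n+1}^*$, the map $j_n^*$ corresponds to restricting a function on $\SP^k(n+1)$ to $\SP^k(n)$ and specializing $b_{n+1}=0$, which is exactly the composite that was used to pass from level $n+1$ to level $n$ in the construction of $\Psi_\bullet^{(k)}$ above. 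Hence the $\Psi_n^{(k)}$ assemble into a single homomorphism of graded $\calR_b$-algebras
\[
\Psi_\infty^{(k)}\colon\calK_\infty^{(k)}\longrightarrow\bigoplus_{m\in\ZZ}\lim_{\longleftarrow\atop n}\CK^m_{T_n}(\IG^k(n))=\bbK^\infty_T(\IG^k).
\]
Here one should be slightly careful that the limit is taken degree-by-degree; since all the maps in sight are graded and $\Phi_\infty^{(k)}$, $\iota_n^*$ are graded, this causes no trouble.

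Finally, for injectivity of $\Psi_\infty^{(k)}$, suppose $f\in\calK_\infty^{(k)}$ is homogeneous with $\Psi_\infty^{(k)}(f)=0$. Then $\Psi_n^{(k)}(f)=0$ for every $n$, hence $\iota_n^*(\Psi_n^{(k)}(f))=0$, which says that the restriction to $\SP^k(n)$ of $\Phi_\infty^{(k)}(f)$, after setting $b_i=0$ for $i>n$, vanishes. Unwinding, $\Phi_v(f)$ specializes to $0$ under $b_i\mapsto 0\ (i>n)$ for every $k$-Grassmannian $v$ supported in $W_n$; letting $n\to\infty$ and using that the coefficients $c_\lambda(a;b)$ of $f$ in the $\GP_\lambda$-expansion (\ref{eqf1}) involve only finitely many $a_i,b_j$, the same argument as in Lemma \ref{lemfullinj} — choosing $N$ large and a $k$-Grassmannian $v$ with enough bars — forces all $c_\lambda(a;b)=0$, so $f=0$. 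I expect the main obstacle to be the bookkeeping in the first paragraph: carefully checking that the restriction-and-specialization map really does land inside $\frakK_n^{(k)}$ and is compatible with both $\Phi_\infty^{(k)}$ and the transition maps $j_n^*$, including the behaviour of the roots $\alpha\notin\Delta^+_n$ and the fact (noted in the Remark after $\frakK_\infty$) that $2+\beta b_i$ is invertible so that types B and C give the same $\frakK_n^{(k)}$. The rest is a formal limit argument together with a reuse of the injectivity technique already established.
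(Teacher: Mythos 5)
Your construction is essentially identical to the paper's: both build $\Psi_\infty^{(k)}$ by composing $\Phi_\infty^{(k)}$ with the restriction-and-specialization maps $\frakK_\infty^{(k)}\to\frakK_n^{(k)}$, identify $\bbK^\infty_T(\IG^k)$ with $\lim_{\longleftarrow}\frakK_n^{(k)}$ via the GKM isomorphisms $\iota_n^*$, and deduce injectivity from the injectivity of $\Phi_\infty^{(k)}$ together with the fact that each value in $\calR_b$ involves only finitely many $b_i$. The proposal is correct; the only cosmetic difference is that you assemble the levelwise maps $\Psi_n^{(k)}$ first and re-run the Lemma \ref{lemfullinj} argument, where the paper simply cites the injectivity of $\frakK_\infty^{(k)}\to\lim_{\longleftarrow}\frakK_n^{(k)}$ and of $\Phi_\infty^{(k)}$.
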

\begin{proof}
Below all the maps are considered as homomorphisms of graded $\calR_b$-algebras. There is a natural map $\frakK^{(k)}_{\infty} \to \frakK^{(k)}_n$ defined by restricting the domain of each function of $\frakK^{(k)}_{\infty}$ from $\SP^{k}$ to $\SP^{k}(n)$ and projecting its values from $\calR_b$ to $\CK^*_{T_n}$. Similarly, one has maps $\frakK^{(k)}_{n+1}  \to \frakK^{(k)}_n$ for all $n$. By the commutativity of these maps, we obtain an injection
\[
\frakK^{(k)}_{\infty} \to \lim_{\longleftarrow\atop{n}} \frakK^{(k)}_n,
\]
where the limit on the right hand side is the direct sum of the projective limits of each graded piece. On the other hand, the isomorphism $\iota_n^*$ naturally induces an isomorphism 
\[
\bbK^{\infty}_T(\IG^k) \cong \lim_{\longleftarrow\atop{n}} \frakK^{(k)}_n.
\]
Composing the above maps with $\Phi_{\infty}^{(k)}$, we obtain the desired injective homomorphism:
\[
\Psi_{\infty}^{(k)}: \calK_{\infty}^{(k)} \stackrel{\Phi_{\infty}^{(k)}}{\longrightarrow} \frakK^{(k)}_{\infty} \longrightarrow \lim_{\longleftarrow\atop{n}} \frakK^{(k)}_n \cong \bbK^{\infty}_T(\IG^k).
\]
\end{proof}
\subsection{$\GC_{\lambda}$ and $\GB_{\lambda}$ represent Schubert classes}\label{sec5-3}
Recall the following notations.
\[
\begin{array}{c|c|c|c|c|c}
		& \IG^k(n) & \scX_m^{(\ell)}&\Omega_{\lambda}^X & {}_k\GX_m^{(\ell)} & {}_k\GX_{\lambda} \\
\hline
X=C &\SG^k(n)&\scC_m^{(\ell)}&\Omega_{\lambda}^C&{}_k\GC_m^{(\ell)}&{}_k\GC_{\lambda}\\
\hline
X=B &\OG^k(n)& \scB_m^{(\ell)}&\Omega_{\lambda}^B&{}_k\GB_m^{\,(\ell)}&{}_k\GB_{\!\!\lambda}
\end{array}
\]
Define the $\calR_b$-algebra homomorphisms $\Psi_n^{(k)}$ for $n\geq 1$ by compositions:
\[
\Psi_n^{(k)}: \calK_{\infty}^{(k)} \stackrel{\Phi_\infty^{(k)}}{\longrightarrow} \frakK^{(k)}_{\infty} \longrightarrow \frakK^{(k)}_n \cong \CK^*_{T_n}(\IG^k(n)).
\]
Since $\Psi_n^{(k)}=j_n^*\circ \Psi_{n+1}^{(k)}$, they induce a map $\calK_{\infty}^{(k)} \to \bbK^{\infty}_T(\IG^k)$ which coincides with $\Psi_\infty^{(k)}$.
\begin{lem}\label{propGT=Seg} 
For $-n \leq \ell \leq n$ and $m\in \ZZ$, we have $\Psi_n^{(k)}({}_k\GX_m^{(\ell)}(x,a|b)) = \scX_m^{(\ell)}$. Furthermore, we have
\begin{equation}\label{GT to Omega}
\Psi_n^{(k)}(\GX_{\lambda}(x,a|b)) =
\begin{cases}
[\Omega_{\lambda}^X]_{T_n} & \mbox{if $\lambda \in \SP^k(n)$}\\
0 & \mbox{if $\lambda\not\in \SP^k(n)$.}
\end{cases}
\end{equation}
\end{lem}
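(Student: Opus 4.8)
The plan is to prove the two assertions in order, using the localization descriptions assembled in this section. For the first assertion, recall that $\Psi_n^{(k)}$ factors through $\Phi_\infty^{(k)}$ followed by the restriction to $\SP^k(n)$ and the identification $\frakK_n^{(k)}\cong \CK^*_{T_n}(\IG^k(n))$ from \eqref{GKMgeom}. So it suffices to check that the localizations of ${}_k\GX_m^{(\ell)}(x,a|b)$ at each $k$-Grassmannian element $w_\mu$, $\mu\in\SP^k(n)$, agree with the localizations $\iota_n^*\scX_m^{(\ell)}$ of the geometric special classes at the fixed point $e_\mu$. Since both sides are collected into generating functions in $u$, the cleanest approach is to compare those generating functions. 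On the algebraic side, I would apply the substitution defining $\Phi_{w_\mu}$ to the generating function $\sum_m {}_k\GC_m^{(\ell)}u^m$ of Definition \ref{df:GCGBdouble}, using the explicit product formula for ${}_k\GC(x,a;u)$ in Definition \ref{GTsingle} together with the one-line notation \eqref{oneline for k grass} of $w_\mu$. On the geometric side, the localization of $\scC_m^{(\ell)}=\scS_m(U^\vee-(E/F^\ell)^\vee)$ at $e_\mu$ is computed from \eqref{Cclassgen} and the Chern class formulas for $c(E/F^\ell;u)$ and $c(U;u)$ at $e_\mu$, where the fiber of $U$ at $e_\mu$ is the isotropic subspace spanned by the corresponding basis vectors. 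Matching the two products reduces to a bookkeeping identity between the combinatorial data of $\mu$ (the $v_p$, $\zeta_q$, $\nu_i$ in \eqref{oneline for k grass}) and the weights cut out by the flag $F^\bullet$; the type-$B$ case follows from the type-$C$ case via Remark \ref{rem:GTminus} and the matching normalization factor $\tfrac{1}{2+\beta u^{-1}}$ in the definitions of $\scB_m^{(\ell)}$ and ${}_k\GB_m^{(\ell)}$. One must also check the boundary convention $\scX_{-i}^{(-n-1)}=(-\beta)^i$ matches ${}_k\GX_m^{(\ell)}=(-\beta)^{-m}$ for $m\le 0$ from Remark \ref{rem:GTminus}; this is where the $\ell=-n-1$ edge case of Theorem \ref{thmPf} is absorbed.

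\textbf{The second assertion.} Once the first part is known, \eqref{GT to Omega} is essentially formal. The element $\GX_\lambda(x,a|b)$ is, by Definition \ref{dfGXPf}, a specific polynomial expression (a sum over $I\subset D(\lambda)$ of Pfaffians) in the classes ${}_k\GX_m^{(\ell)}$. Since $\Psi_n^{(k)}$ is an $\calR_b$-algebra homomorphism, it commutes with this Pfaffian expression, so
\[
\Psi_n^{(k)}(\GX_\lambda) = \sum_{I\subset D(\lambda)} \Pf\left(\sum_{p\ge 0,\, p+q\ge 0} \ff_{pq}^{ij,I}\,\scX_{\lambda_i+d_i^I+p}^{(\chi_i)}\scX_{\lambda_j+d_j^I+q}^{(\chi_j)}\right)_{1\le i<j\le m},
\]
using the first part to replace each $\Psi_n^{(k)}({}_k\GX_m^{(\ell)})$ by $\scX_m^{(\ell)}$. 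When $\lambda\in\SP^k(n)$, the right-hand side is exactly the Pfaffian formula of Theorem \ref{thmPf} for $[\Omega_\lambda^X]_{T_n}$, giving the first case. When $\lambda\notin\SP^k(n)$, one has to argue the expression vanishes: here I would use that $\Psi_n^{(k)}=j_n^*\circ\Psi_{n+1}^{(k)}$ (stated just above the lemma) together with \eqref{stabSch}, or alternatively observe that the image of $\Psi_n^{(k)}$ lands in $\CK^*_{T_n}(\IG^k(n))$ whose $\CK^*_{T_n}$-module basis is $\{[\Omega_\mu^X]_{T_n}\}_{\mu\in\SP^k(n)}$ and that the class represented by the same Pfaffian in the stable ring $\bbK^\infty_T(\IG^k)$ pushes to $0$ under $j_n^*$ by the stability \eqref{stabSch} of the Schubert classes, since the Pfaffian formula is compatible with the maps $j_n^*$.

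\textbf{Main obstacle.} The substantive work is entirely in the first assertion: the explicit verification that $\Phi_{w_\mu}$ applied to the generating function of ${}_k\GC_m^{(\ell)}(x,a|b)$ reproduces the localization of $\scC_m^{(\ell)}$ at $e_\mu$. This requires carefully tracking how the substitution $x_i\mapsto b_{w_\mu(i)}$ or $0$, $a_i\mapsto b_{-w_\mu(i)}$, $b_i\mapsto b_i$ interacts with the infinite products over $i$, confirming that the $\bar x_i$ factors and the cancellation property (2)$'$ conspire so that only finitely many factors survive and they reassemble into $c(E/F^\ell)/c(U)$ evaluated at $e_\mu$ — in particular that the $\prod_{i=1}^k(1+(u+\beta)a_i)$ factor accounts precisely for the $v_1,\dots,v_k$ entries of \eqref{oneline for k grass} that contribute to the flag $F^\ell$. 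I expect this to be a moderately involved but ultimately mechanical computation, and I would present it by first doing the universal ($n=\infty$) comparison of generating functions and then specializing, so that the infinite-product manipulations are done once.
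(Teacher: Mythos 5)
Your proposal follows essentially the same route as the paper's proof: the first assertion is verified by comparing the generating function of ${}_k\GC_m^{(\ell)}$ under $\Phi_{w_\mu}$ with the localization of $\scC_m^{(\ell)}$ at $e_\mu$ computed from \eqref{Cclassgen} and the Chern roots of $U$ (with type B handled analogously), and the second assertion follows by pushing the Pfaffian formula of Definition \ref{dfGXPf} through the algebra homomorphism $\Psi_n^{(k)}$, matching Theorem \ref{thmPf} when $\lambda\in\SP^k(n)$ and using $\Psi_n^{(k)}=j_n^*\circ\Psi_{n+1}^{(k)}$ together with \eqref{stabSch} for the vanishing. The approach is correct and consistent with the paper.
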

\begin{proof}
The first claim is a generalization of \cite[Lemma 10.3]{IkedaMihalceaNaruse}. The proof is analogous and it follows from the comparison of the localizations at $\mu \in \SP^k(n)$. Let $w_{\mu}$ be the element of $W_n^{(k)}$ corresponding to $\mu$ and suppose that its one line notation is given as (\ref{oneline for k grass}). Let $\iota_{\mu}^*: \CK^*_{T_n}(\IG^k(n))\to \CK^*_{T_n}$ be the pullback of the inclusion $\iota_\mu: \{e_\mu\} \to\IG^k(n)$ and $\Phi_{w_{\mu}}$ the map introduced at Definition \ref{dflocPhi_v}. We prove for the symplectic case. It suffices to show that 
\begin{equation}\label{eq:loc=loc}
\Phi_{w_\mu}\left(\sum_{m\in \ZZ}  {}_k\GC_m^{(\ell)} u^m\right)=\iota_{\mu}^*\left(\sum_{m\in \ZZ}  \scC_m^{(\ell)} u^m\right).
\end{equation}
We have
\begin{eqnarray*}
&&\Phi_{w_\mu}\left(\sum_{m\in \ZZ} {}_k\GC_m^{(\ell)} u^m\right)\\
&=&\begin{cases}
\dfrac{1}{1+\beta u^{-1}}  \prod_{i=1}^{s} \dfrac{1 + (u+\beta)\bar b_{\zeta_i}}{1 + (u+\beta)b_{\zeta_i}}  \prod_{i=1}^{k} (1 + (u+\beta) \bar b_{v_i}) \prod_{i=1}^{\ell}(1 + (u+\beta) b_i) & (\ell\geq 0),\\
\dfrac{1}{1+\beta u^{-1}}  \prod_{i=1}^{s} \dfrac{1 + (u+\beta)\bar b_{\zeta_i}}{1 + (u+\beta)b_{\zeta_i}}  \prod_{i=1}^{k} (1 + (u+\beta) \bar b_{v_i}) \prod_{i=1}^{|\ell|}\dfrac{1}{1 + (u+\beta)\bar b_i} & (\ell \leq 0).
\end{cases}
\end{eqnarray*}
On the other hand, if  $\bfz_{k+1},\dots, \bfz_{n}$ are the Chern roots of $U$, by (\ref{Cclassgen}) we have 
\begin{eqnarray*}
\sum_{m\in \ZZ} \scC_m^{(\ell)}   u^m 
&=&
\begin{cases}
\dfrac{1}{1+\beta u^{-1}} \dfrac{\prod_{i=1}^n(1+(u+\beta)\bar b_i)}{\prod_{i=k+1}^n (1+(u+\beta) \mathbf{z}_i)} \prod_{i=1}^{\ell}(1+(u+\beta)  b_i)& (\ell\geq 0),\\
\dfrac{1}{1+\beta u^{-1}}\dfrac{\prod_{i=1}^n(1+(u+\beta)\bar b_i)}{\prod_{i=k+1}^n (1+(u+\beta) \mathbf{z}_i)} \prod_{i=1}^{|\ell|}\dfrac{1}{1+(u+\beta) \bar b_i} & (\ell\leq 0).
\end{cases}
\end{eqnarray*} 
Now we use the identity 
\[
\iota_{\mu}^*(c(U;u)) = \prod_{i=1}^s(1+ b_{\zeta_i} u)\cdot  \prod_{i=1}^{n-k-s} (1+ \bar b_{u_i} u)
\]
which follows from (the proof of) Proposition 10.1 \cite{IkedaMihalceaNaruse} and obtain (\ref{eq:loc=loc}). The proof for the case $\IG^k(n)=\OG^k(n)$ is similar.

Finally we show the latter claim. If $\lambda\in \SP^k(n)$, then $[\Omega_{\lambda}^X]_{T_n}$ and $\GX_{\lambda}$ are given by the same Pfaffian formulas except that the entries are given in terms of $\scX_{m_i}^{(\chi_i)}$ or ${}_k\GX_{m_i}^{(\chi_i)}$ respectively. Therefore (\ref{GT to Omega}) for the case when $\lambda\in \SP^k(n)$ follows from the former claim. For the vanishing, it suffices to show the case when $\lambda\in \SP^k(n+1)\backslash\SP^k(n)$. In that case, by $\Psi_n^{(k)}=j_n^*\circ \Psi_{n+1}^{(k)}$, we have $\Psi_{n}^{(k)}(\GX_{\lambda}(x,a|b))  = j_n^*[\Omega_{\lambda}^X]_{T_{n+1}}$ which is $0$ by (\ref{stabSch}). 
\end{proof}
\begin{thm}\label{thm:iso}
The map $\Psi_{\infty}^{(k)}$ restricts to an $\calR_b$-algebra isomorphism
\[
\calK_{\infty}^{(k)} \cong \bbK^{\fin}_T(\IG^k)
\]
which sends ${}_k\GX_{\lambda}$ to $[\Omega_{\lambda}^X]_T$. In particular, $\bbK^{\fin}_T(\IG^k)$ is an $\calR_b$-algebra.
\end{thm}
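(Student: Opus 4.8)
The plan is to upgrade the injective homomorphism $\Psi_\infty^{(k)}$ of Proposition \ref{propvarpi} to an isomorphism onto $\bbK^{\fin}_T(\IG^k)\subset \bbK^{\infty}_T(\IG^k)$, by showing that (a) $\Psi_\infty^{(k)}$ lands inside $\bbK^{\fin}_T(\IG^k)$, (b) it sends ${}_k\GX_\lambda$ to $[\Omega_\lambda^X]_T$, and (c) it is surjective onto $\bbK^{\fin}_T(\IG^k)$. Step (b) is essentially already in hand: Lemma \ref{propGT=Seg} gives $\Psi_n^{(k)}({}_k\GX_\lambda)=[\Omega_\lambda^X]_{T_n}$ for $\lambda\in\SP^k(n)$ and $0$ otherwise, so passing to the limit (using $\Psi_n^{(k)}=j_n^*\circ\Psi_{n+1}^{(k)}$ and the definition of $[\Omega_\lambda^X]_T$ as the limit of the $[\Omega_\lambda^X]_{T_n}$) yields $\Psi_\infty^{(k)}({}_k\GX_\lambda)=[\Omega_\lambda^X]_T\in\bbK^{\fin}_T(\IG^k)$. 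In particular each image $[\Omega_\lambda^X]_T$ is a member of the purported basis of $\bbK^{\fin}_T$, so (b) already implies that the image contains all the $[\Omega_\lambda^X]_T$; what remains is to see that these ${}_k\GX_\lambda$ (together with $\calR_b$-linear combinations) exhaust both sides.

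The key algebraic input I would isolate first is that $\{{}_k\GX_\lambda\}_{\lambda\in\SP^k}$ is a \emph{formal $\calR_b$-basis} of $\calK_\infty^{(k)}$: that is, every homogeneous element of $\calK_\infty^{(k)}$ of degree $d$ is uniquely a (possibly infinite) sum $\sum_\lambda c_\lambda\, {}_k\GX_\lambda$ with $c_\lambda\in(\calR_b)_{d-|\lambda|}$. This is the $\calK_\infty^{(k)}$-analogue of Proposition \ref{propfb} together with the fact that the ${}_k\GX_\lambda$ have ``leading term'' $\GP_\lambda$ (respectively with the appropriate $a,b$-dependence) in a suitable filtration — concretely, one checks by the Pfaffian definition (Definition \ref{dfGXPf}) and the generating-function definitions of ${}_k\GX_m^{(\ell)}$ that, modulo terms of strictly smaller ``$x$-degree'' and terms involving $a,b$, the function ${}_k\GX_\lambda$ reduces to $\GP_{\mu(\lambda)}(x)$ for a strict partition $\mu(\lambda)$ obtained from $\lambda$, and that $\lambda\mapsto\mu(\lambda)$ is a degree-preserving bijection of the relevant index sets. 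Granting this, the unique-expansion statement follows from Proposition \ref{propfb} applied to $\GGamma$ together with the structure $\calK_\infty^{(k)}=(\GGamma\otimes\calR_a)^{W_{(k)}}\otimes\calR_b$ and a triangularity argument.

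With the basis statement in place the isomorphism is immediate: $\Psi_\infty^{(k)}$ is an injective $\calR_b$-algebra map (Proposition \ref{propvarpi}) sending the formal $\calR_b$-basis $\{{}_k\GX_\lambda\}$ of $\calK_\infty^{(k)}$ bijectively onto the formal $\calR_b$-basis $\{[\Omega_\lambda^X]_T\}$ of $\bbK^{\fin}_T(\IG^k)$ (Lemma \ref{lemLimBasis} and Definition \ref{dfFiniteCKT}). Any element $f=\sum_\lambda c_\lambda\,{}_k\GX_\lambda\in\calK_\infty^{(k)}$ ($c_\lambda\in\calR_b$) maps to $\sum_\lambda c_\lambda\,[\Omega_\lambda^X]_T$, which lies in $\bbK^{\fin}_T(\IG^k)$ by Definition \ref{dfFiniteCKT}; hence the image is exactly $\bbK^{\fin}_T(\IG^k)$, giving both the containment (a) and surjectivity (c). Since $\Psi_\infty^{(k)}$ is a ring homomorphism, transporting the product makes $\bbK^{\fin}_T(\IG^k)$ an $\calR_b$-algebra, which is the final assertion.

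The main obstacle is the formal-basis statement for $\calK_\infty^{(k)}$, i.e. proving that the ${}_k\GX_\lambda$ span $\calK_\infty^{(k)}$ over $\calR_b$ in the infinite-linear-combination sense. The subtlety is twofold: one must control the ``$a,b$-tails'' of the Pfaffian expressions (the elements ${}_k\GX_m^{(\ell)}$ already involve the $a_i$ and $b_i$, so ${}_k\GX_\lambda$ is not a pure $GP$-function but an $\calR_a\otimes\calR_b$-combination of such), and one must verify the triangularity of the transition matrix between $\{{}_k\GX_\lambda\}$ and a known basis is unitriangular with respect to a well-founded order on $\SP^k$ so that infinite expansions are legitimate and unique. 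I expect this to reduce, after unwinding Definition \ref{dfGXPf} and Remark \ref{rem:GTminus}, to a combinatorial computation on characteristic indices already essentially carried out in \cite{HIMN} and \cite{IkedaMatsumura}; the bulk of the writing will be checking that the passage from the finite-$n$ statement (where $\frakK_n^{(k)}\cong\CK^*_{T_n}(\IG^k(n))$ has the honest Schubert basis) to the limit is compatible with the $\calR_b$-module (rather than $\CK^*_{T_n}$-module) structure, which is exactly what the definition of $\bbK^{\fin}_T$ versus $\bbK^{\infty}_T$ is designed to handle.
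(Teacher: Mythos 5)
Your overall skeleton --- Lemma \ref{propGT=Seg} for ${}_k\GX_\lambda\mapsto[\Omega_\lambda^X]_T$, a formal $\calR_b$-basis statement for $\{{}_k\GX_\lambda\}_{\lambda\in\SP^k}$ in $\calK_\infty^{(k)}$, and Lemma \ref{lemLimBasis} on the geometric side --- is exactly right, and your steps (a), (b), (c) are handled correctly. The gap is in the route you propose for the one step you yourself flag as the main obstacle, the formal-basis statement. Your triangularity sketch asserts that, modulo lower-order terms and $a,b$-dependence, ${}_k\GX_\lambda$ reduces to $\GP_{\mu(\lambda)}(x)$ for a \emph{degree-preserving bijection} $\lambda\mapsto\mu(\lambda)$ onto strict partitions. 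No such bijection exists for $k\geq 1$: already in degree $2$ with $k=1$ there are two $1$-strict partitions, $(2)$ and $(1,1)$, but only one strict partition, $(2)$. The point is that the $\calR_b$-basis of $\calK_\infty^{(k)}$ is indexed by $\SP^k$ while Proposition \ref{propfb} gives a formal basis of $\GGamma$ indexed by $\SP^0$; the discrepancy is absorbed by the $a_1,\dots,a_k$-dependence inside $(\GGamma\otimes_{\QQ[\beta]}\calR_a)^{W_{(k)}}$, so the triangularity you would need is against a basis indexed by pairs (strict partition, monomial in the $a_i$). Setting that up and proving unitriangularity with respect to a well-founded order is a substantial piece of combinatorics that your sketch does not supply.

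Moreover, that computation is unnecessary: the paper obtains the formal-basis property of $\{{}_k\GX_\lambda\}$ as a consequence of facts you already have. Since $\Psi_\infty^{(k)}$ is injective (Proposition \ref{propvarpi}), sends ${}_k\GX_\lambda$ to $[\Omega_\lambda^X]_T$ (Lemma \ref{propGT=Seg}), and the classes $[\Omega_\lambda^X]_T$ form a formal basis of $\bbK^{\infty}_T(\IG^k)$ (Lemma \ref{lemLimBasis}), one expands $\Psi_\infty^{(k)}(f)=\sum_\lambda c_\lambda[\Omega_\lambda^X]_T$ for a given $f\in\calK_\infty^{(k)}$ and concludes that $f-\sum_\lambda c_\lambda\,{}_k\GX_\lambda$ lies in the kernel of $\Psi_\infty^{(k)}$, hence vanishes; uniqueness of the coefficients is inherited from Lemma \ref{lemLimBasis}. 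In other words, the geometric basis is pulled back through the injection to produce the algebraic one, and the rest of your argument then goes through verbatim. You should replace the triangularity argument with this; as written, your proposal's central step is both harder than necessary and false as stated.
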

\begin{proof}
The injectivity of $\Psi_{\infty}^{(k)}$ and Lemma \ref{lemLimBasis} imply that $\GX_{\lambda}(x,a|b) (\lambda \in \SP^k)$ form a formal basis $\calK_{\infty}^{(k)}$ over $\calR_b$. That is, any element $f$ of $\calK_{\infty}^{(k)}$ can be expressed uniquely as a possibly infinite $\calR_b$-linear combination
\[
f = \sum_{\lambda \in \SP^k} c_{\lambda}^X(b) \GX_{\lambda}(x,a|b), \ \ \ \ c_{\lambda}^X(b) \in \calR_b.
\]
On the other hand, Lemma \ref{propGT=Seg} shows that $\Psi_{\infty}^{(k)}$ sends $\GX_{\lambda}(x,a|b)$ to $[\Omega_{\lambda}^X]_T$. Therefore we can conclude that the image of $\Psi_{\infty}^{(k)}$ coincides with $\bbK^{\fin}_T(\IG^k)$.
\end{proof}
\begin{rem}
If $k=0$ and $\lambda$ is a strict partition, then one result shows that $\GQ_{\lambda}(x|b)$ and $\GP_{\lambda}(x|0,b)$ defined by Ikeda--Naruse in \cite{IkedaNaruse} coincide with the functions ${}_0\GC_{\lambda}(x|b)$ and ${}_0\GB_{\lambda}(x|b)$ respectively, simultaneously establishing their Pfaffian formula.
\end{rem}
 
\vspace{5mm}

\noindent\textbf{Acknowledgements.} 
A considerable part of this work developed while the first and third authors were affiliated to KAIST, which they would like to thank for the excellent working conditions.
A part of this work was developed while the first author was affiliated to  POSTECH, which he would like to thank for the excellent working conditions. He would also like to gratefully acknowledge the support of the National Research Foundation of Korea (NRF) through the grants funded by the Korea government (MSIP) (2014-001824 and 2011-0030044).
The second author  is supported by Grant-in-Aid for Scientific Research (C) 18K03261, 15K04832.
The third author is supported by 	Grant-in-Aid for Young Scientists (B) 16K17584.
The fourth author is supported by Grant-in-Aid for Scientific Research (C) 25400041, (B) 16H03921.

\bibliographystyle{acm}
\bibliography{references}   

\def\cprime{$'$}
\begin{thebibliography}{10}

\bibitem{Anderson2016}
{\sc {Anderson}, D.}
\newblock {K-theoretic Chern class formulas for vexillary degeneracy loci}.
\newblock {\em ArXiv e-prints\/} (Jan. 2017).

\bibitem{BuchKreschTamvakis1}
{\sc Buch, A.~S., Kresch, A., and Tamvakis, H.}
\newblock A {G}iambelli formula for isotropic {G}rassmannians.
\newblock {\em Selecta Math. (N.S.) 23}, 2 (2017), 869--914.

\bibitem{DaiLevine}
{\sc Dai, S., and Levine, M.}
\newblock Connective algebraic {$K$}-theory.
\newblock {\em J. K-Theory 13}, 1 (2014), 9--56.

\bibitem{GKM}
{\sc Goresky, M., Kottwitz, R., and MacPherson, R.}
\newblock Equivariant cohomology, {K}oszul duality, and the localization
  theorem.
\newblock {\em Invent. Math. 131}, 1 (1998), 25--83.

\bibitem{HIMN}
{\sc Hudson, T., Ikeda, T., Matsumura, T., and Naruse, H.}
\newblock Degeneracy loci classes in {$K$}-theory --- determinantal and
  {P}faffian formula.
\newblock {\em Adv. Math. 320\/} (2017), 115--156.

\bibitem{IkedaMatsumura}
{\sc Ikeda, T., and Matsumura, T.}
\newblock Pfaffian sum formula for the symplectic {G}rassmannians.
\newblock {\em Math. Z. 280}, 1-2 (2015), 269--306.

\bibitem{IkedaMihalceaNaruse}
{\sc Ikeda, T., Mihalcea, L.~C., and Naruse, H.}
\newblock Double {S}chubert polynomials for the classical groups.
\newblock {\em Adv. Math. 226}, 1 (2011), 840--886.

\bibitem{IkedaNaruse}
{\sc Ikeda, T., and Naruse, H.}
\newblock {K}-theoretic analogues of factorial {S}chur {P}- and {Q}-functions.
\newblock {\em Adv. Math. 226}, 1 (2011), 840--886.

\bibitem{KrNr}
{\sc Kirillov, A.~N., and Naruse, H.}
\newblock Construction of double {G}rothendieck polynomials of classical types
  using id{C}oxeter algebras.
\newblock {\em Tokyo J. Math. 39}, 3 (2017), 695--728.

\bibitem{KostantKumar}
{\sc Kostant, B., and Kumar, S.}
\newblock {$T$}-equivariant {$K$}-theory of generalized flag varieties.
\newblock {\em J. Differential Geom. 32}, 2 (1990), 549--603.

\bibitem{Krishna}
{\sc Krishna, A.}
\newblock Equivariant cobordism for torus actions.
\newblock {\em Adv. Math. 231}, 5 (2012), 2858--2891.

\bibitem{Krishna0}
{\sc Krishna, A.}
\newblock Equivariant cobordism of schemes.
\newblock {\em Doc. Math. 17\/} (2012), 95--134.

\bibitem{MacdonaldHall}
{\sc Macdonald, I.~G.}
\newblock {\em Symmetric functions and {H}all polynomials}, second~ed.
\newblock Oxford Mathematical Monographs. The Clarendon Press, Oxford
  University Press, New York, 1995.
\newblock With contributions by A. Zelevinsky, Oxford Science Publications.

\bibitem{Tamvakis2011Crelle}
{\sc Tamvakis, H.}
\newblock Giambelli, {P}ieri, and tableau formulas via raising operators.
\newblock {\em J. Reine Angew. Math. 652\/} (2011), 207--244.

\bibitem{TamvakisWilson}
{\sc Tamvakis, H., and Wilson, E.}
\newblock Double theta polynomials and equivariant {G}iambelli formulas.
\newblock {\em Math. Proc. Cambridge Philos. Soc. 160}, 2 (2016), 353--377.

\bibitem{WilsonThesis}
{\sc Wilson, V.}
\newblock {E}quivariant {G}iambelli {F}ormulae for {G}rassmannians.
\newblock Ph.D. thesis. University of Maryland (2010).

\end{thebibliography}

\


\begin{small}
{\scshape
\noindent Thomas Hudson, Fachgruppe Mathematik und Informatik, Bergische Universit\"{a}t Wuppertal, 
42119 Wuppertal, Germany
}
\end{small}

{\textit{email address}: \tt{hudson@math.uni-wuppertal.de}}

\

\begin{small}
{\scshape
\noindent  Takeshi Ikeda, Department of Applied Mathematics, Okayama University of Science, Okayama 700-0005, Japan
}
\end{small}

{\textit{email address}: \tt{ike@xmath.ous.ac.jp}}

\

\begin{small}
{\scshape
\noindent Tomoo Matsumura, Department of Applied Mathematics, Okayama University of Science, Okayama 700-0005, Japan
}
\end{small}

{\textit{email address}: \tt{matsumur@xmath.ous.ac.jp}}

\

\begin{small}
{\scshape
\noindent Hiroshi Naruse, Graduate School of Education, University of Yamanashi, Yamanashi 400-8510, Japan
}
\end{small}

{\textit{email address}: \tt{hnaruse@yamanashi.ac.jp}}

\end{document}